\pgfplotsset{compat=1.13}
\numberwithin{theorem}{section}
\pgfplotsset{compat=1.13}
\tikzstyle{loosely dashed}=          [dash pattern=on 3pt off 6pt]
\pgfplotsset{plotOptions/.style={%
	width=\linewidth,
	xlabel={Iteration $k$},
	label style={font=\scriptsize},
	legend style={font=\scriptsize},
	xtick={1,10,100},
	tick label style={font=\scriptsize},
	solid,
	very thick
}}
\pgfplotsset{select coords between index/.style 2 args={
	x filter/.code={
		\ifnum\coordindex<#1\fi
		\ifnum\coordindex>#2\fi
	}
}}
\definecolor{silver}{cmyk}{0,0,0,0.3}
\definecolor{yellow}{cmyk}{0,.5,.3,0}
\definecolor{orange}{cmyk}{0,0.5,1,0}
\definecolor{red}{cmyk}{0,1,1,0}
\definecolor{reddishyellow}{cmyk}{0.5,0.2,0,0}
\definecolor{black}{cmyk}{0,0,0.0,1.0}
\definecolor{darkYellow}{cmyk}{0.8,0,0.0,0.5}
\definecolor{darkSilver}{cmyk}{0,0,0,0.1}
\definecolor{lightyellow}{cmyk}{0.0,0,0,0.0}
\definecolor{lighteryellow}{cmyk}{0,0,0.0,0.0}
\definecolor{lighteryellow}{cmyk}{0,0,0.0,0.0}
\definecolor{lightestyellow}{cmyk}{0.0,0,0.0,0.0}
\definecolor{colorP1}{RGB}{55,126,184}  
\definecolor{colorP2}{RGB}{228,26,28}  
\definecolor{colorP3}{RGB}{152,78,163} 
\definecolor{colorP4}{RGB}{77,175,74}  
\definecolor{colorP5}{rgb}{0.6, 0.4, 0.08} 
\definecolor{colorP6}{cmyk}{0,0.5,1,0}
\definecolor{colorP7}{RGB}{240,130,240} 
\newcommand{\bvv}{\mathbf{v}}
\newcommand{\bww}{\mathbf{w}}
\newcommand{\bee}{\mathbf{e}}
\newcommand{\bhh}{\mathbf{h}}
\newcommand{\buu}{\mathbf{u}}
\DeclareMathOperator{\dom}{dom}
\DeclareMathOperator{\ri}{ri}
\DeclareMathOperator*{\argmin}{argmin}
\DeclareMathOperator{\Fmu}{\mathcal{F}_{\mu,\infty}}
\DeclareMathOperator{\Fccp}{\mathcal{F}_{0,\infty}}
\DeclareMathOperator{\PDg}{\mathrm{PD}}
\DeclareMathOperator{\Mor}{\mathrm{M}}
\newcommand{\Rd}{\mathbb{R}^d}
\newcommand{\R}{\mathbb{R}}
\newcommand{\N}{\mathbb{N}}
\newcommand{\inner}[2]{{\langle #1; #2\rangle}}
\newcommand{\normsq}[1]{{\lVert #1\rVert ^2}}
\newcommand{\prox}{\mathrm{prox}}
\newcommand{\norm}[1]{{\lVert #1\rVert}}
\newcommand{\Expect}[2]{\ifthenelse{\equal{#2}{}}{\mathbb{E}_{{#2}\!}\left[#1\right]}{\mathbb{E}_{{#2}\!}\left[#1\right]}}
\title{Principled analyses and design of first-order methods with inexact proximal operators\thanks{MB acknowledges support from an AMX fellowship. The authors acknowledge support from the European Research Council (grant SEQUOIA 724063).This work was funded in part by the french government under management of Agence Nationale de la recherche as part of the ``Investissements d'avenir'' program, reference ANR-19-P3IA-0001 (PRAIRIE 3IA Institute).}
}
\begin{document}

\author{Mathieu Barr\'e \and Adrien B. Taylor \and Francis Bach}


\institute{Mathieu Barr\'e \at
              D.I. \'Ecole Normale Sup\'erieure, Paris, France. mathieu.barrer@inria.fr 
          \and
          Adrien B. Taylor \at
    INRIA, D.I. \'Ecole Normale Sup\'erieure, Paris, France. adrien.taylor@inria.fr
\and
            Francis Bach \at            
            INRIA, D.I. \'Ecole Normale Sup\'erieure, Paris, France. francis.bach@inria.fr
}

\date{\today}

\maketitle
\begin{abstract}
\emph{Proximal} operations are among the most common primitives appearing in both practical and theoretical (or high-level) optimization methods. This basic operation typically consists in solving an intermediary (hopefully simpler) optimization problem. In this work, we survey notions of inaccuracies that can be used when solving those intermediary optimization problems. Then, we show that worst-case guarantees for algorithms relying on such inexact proximal operations can be systematically obtained through a generic procedure based on semidefinite programming. This methodology is primarily based on the approach introduced by Drori and Teboulle~\cite{drori2014performance} and on convex interpolation results, and allows producing non-improvable worst-case analyzes. In other words, for a given algorithm, the methodology generates both worst-case certificates (i.e., proofs) and problem instances on which those bounds are achieved.
	
Relying on this methodology, we study numerical worst-case performances of a few basic methods relying on inexact proximal operations including accelerated variants, and design a variant with optimized worst-case behaviour. We further illustrate how to extend the approach to support strongly convex objectives by studying a simple relatively inexact proximal minimization method.
\end{abstract}

\let\thempfn\relax%

\section{Introduction} Proximal operations serve as base primitives in many conceptual and practical optimization methods. Formally, given a closed, proper, convex function $h:\Rd\rightarrow\R$, the proximal map of $h$, denoted by $\prox_{\lambda h}:\Rd\rightarrow\Rd$, is
\[ \prox_{\lambda h} (z)=\argmin_{x\in\Rd} \left\{\lambda h(x)+\tfrac12 \normsq{x-z} \right\},\]
where $\lambda$ is a step size.  In ideal situations, proximal operations are accessed through analytical expressions (see e.g.,~\cite{chierchia2020proximity}). However, in many cases, proximal steps have to be computed only approximately (e.g., via iterative methods). Although those problems may often be solved efficiently, one has to take those inaccuracies into account while analyzing the corresponding algorithms, in order to design methods that are sufficiently robust, and for avoiding solving the proximal subproblem to an unnecessary high precision.
Those topics are motivated in different areas of the optimization literature, in particular for augmented Lagrangian techniques (e.g., when the augmented Lagrangian has to be solved numerically), and in the context of splitting methods when proximal operators are complicated, or expensive, to compute.

In this work, we show that the performance estimation framework, originating from~\cite{drori2014performance}, can be used for studying algorithms whose base operations are approximate proximal operators. We illustrate the approach by studying numerical worst-case guarantees on various methods from the literature, and by designing an optimized inexact proximal minimization method. On the way, we survey notions of approximate proximal operators that are used in the literature.

\subsection{Motivations, contributions and organization} The main motivation of this work is to improve our capabilities of performing worst-case analyses of algorithms involving inexact proximal operations. Relying on the idea of performance estimation, and convex interpolation, we show that such analyses (i) can be completed in a principled way, and (ii) that semidefinite programming can help in the process of designing the proof. We first illustrate the approach on a variant of the inexact proximal point algorithm under a simple model of inaccuracy, and further explore the worst-case behavior of a few accelerated inexact proximal methods from~\citep{salzo2012inexact,monteiro2013accelerated}. Then, we use it for designing an optimized relatively inexact method under a generic primal-dual inaccuracy model. Finally, we use a simple inexact proximal minimization method for showing how to extend the methodology to treat strongly convex objectives.

This work is organized as follows: in Section~\ref{sec:notions} we survey common and natural notions of inaccuracies. Then, because of the structure of the inexactness criteria, we show in Section~\ref{sec:pep} that worst-case analyses of algorithms relying on such inexact proximal operations can be studied with performance estimation, which we later illustrate through several examples. Finally, we use the approach to optimize the parameters of a method relying on inexact proximal operations, in
Section~\ref{sec:orip}. Strongly convex objectives are treated in Section~\ref{sec:str-pep}, before drawing some conclusions in Section~\ref{sec:ccl}.

\subsection{Relationships with previous works}\label{sec:prevworks} Proximal operations, originally introduced by Moreau~\cite{moreau1962proximite,moreau1965proximite}, serve as base primitives in many conceptual and practical algorithms. In optimization, its use is omnipresent and originally attributed to Martinet~\cite{martinet1970breve,martinet1972det} and Rockafellar~\cite{rockafellar1976augmented,rockafellar1976monotone}.  Successful examples of algorithms relying on proximal operators include proximal gradient methods~\cite{bruck1975iterative,lions1979,passty1979ergodic,beck2009fast,nesterov2013gradient}, the celebrated alternating direction method of multipliers~\cite{Fortin,Gabay}, the related Douglas-Rachford splitting~\cite{douglas1956,lions1979,eckstein1992douglas}, and many other \emph{splitting methods}~\cite{lions1979,eckstein1989splitting}. This type of methods are abundantly used in the optimization literature, and lies at the heart of many optimization paradigms that includes distributed/decentralized optimization (e.g., through operator splitting), augmented Lagrangian techniques~\cite{rockafellar1973dual,rockafellar1976augmented,iusem1999augmented,eckstein2013practical}, and other meta-algorithms, such as ``Catalyst''~\cite{lin2015universal,lin2018catalyst}. The many aspects of their theoretical and practical uses are heavily covered
in the literature, and we defer those discussions to surveys on such topics~\cite{boyd2011distributed,combettes2011proximal,eckstein2012augmented,parikh2014proximal,ryu2016primer} and the references therein.

\paragraph{Proximal operations and inexactness} Using inexact solutions to proximal operations is not a new idea. First analyses of approximate proximal algorithms for monotone inclusions and optimization problems emerged in~\cite{rockafellar1976monotone}, and this topic appeared in many works since then (see e.g.,~\cite{guler1992new,salzo2012inexact,auslender1987numerical,solodov2001unified,fuentes2012descentwise,correa1993convergence,solodov2000error,solodov2000comparison}). Many notions of inaccuracies are also already covered in the literature. In particular, those notions were applied to the proximal point algorithm~\cite{burachik1997enlargement,eckstein1998approximate,solodov1999hybrid,monteiro2013accelerated}, inexact splitting scheme such as forward-backward splitting (and its accelerated variants)~\cite{schmidt2011convergence,villa2013accelerated,millan2019inexact,Bello2020}, Douglas-Rachford~\cite{eckstein2017approximate,Eckstein2018,svaiter2018weakly,alves2019relative}, three-operator splitting~\cite{zong2018convergence}, online optimization~\cite{dixit2019online,ajalloeian2020inexact,bastianello2020distributed}, and for designing meta-algorithms such as the hybrid approximate extragradient method~\cite{solodov1999hybrid,monteiro2010complexity,monteiro2013accelerated,alves2019inexact}, and ``Catalyst''~\cite{lin2015universal,lin2018catalyst}. Inexact proximal operations are also closely related to the theory of $\varepsilon$-subdifferentials, introduced in~\cite{brondsted1965subdifferentiability}, and to their inexact gradient and subgradient methods (see e.g.,~\cite{simonetto2016primal,millan2019inexact,devolder2013first,devolder2014first}). Finally, let us mention higher-order proximal methods, that are introduced in~\cite{nesterov2020inexactAcc,nesterov2020inexact}, and also used together with notions for approximating them.
\paragraph{Monotone inclusions} Inexact proximal methods were also studied in many works in the context of monotone operators and monotone inclusion problems~\cite{rockafellar1976monotone} (see e.g.,~\cite{bauschke2011convex} for the general topic of monotone operators, or the nice tutorial~\cite{ryu2016primer}). This was often done through notions of enlargements~\cite{burachik1998varepsilon,burachik1997enlargement,burachik2015additive}, see for example~\cite{solodov1999hybrid,solodov2001unified,burachik1999bundle,alves2019inexact,monteiro2010complexity,boct2015hybrid}. Though we are not going to work with monotone operators and inclusions, there is no apparent obstacle in applying the methodology presented here directly for dealing with inexactness in such setups.

\paragraph{Computer-assisted analyses} Using semidefinite programming for obtaining worst-case guarantees in the context of first-order optimization schemes dates back to~\cite{drori2014performance}, via so-called \emph{performance estimation problems} (PEPs), which they use to provide novel analyses of gradient, heavy-ball and accelerated gradient methods (see~\cite{polyak1964some,Nesterov:1983wy}). Performance estimation problems were coupled with ``convex interpolation'' results in~\cite{taylor2017smooth,taylor2017exact}, allowing the PEP approach to be guaranteed to generate \emph{tight} worst-case certificates. For obtaining simpler proofs, performance estimation problems can be used for designing potential functions~\cite{taylor2019stochastic}. This idea is closely related to that based on \emph{integral quadratic constraints} (IQCs), originally coined in control theory~\cite{megretski1997system}, and which were introduced for analyzing linearly-converging first-order methods in~\cite{lessard2016analysis}; and later extended to deal with sublinear convergence rates~\cite{hu2017dissipativity}. We will not further discuss IQCs here, as the current framework essentially relies on PEPs. Those methodologies being closely related, the developments below could be formulated, instead, in control-theoretic terms.

Let us mention that the PEP methodology was already taken further in different directions, as for example in the context of monotone inclusions: for the three operator splitting~\cite{ryu2018operator}, proximal point algorithm~\cite{gu2019optimal,gu2019optimal2}, and accelerated variants~\cite{kim2021accelerated}. The methodology was also used in a saddle-point setting in~\cite[Section 4.6]{drori2014contributions} and for studying worst-case properties of fixed-point iterations~\cite{lieder_halpern}. Both IQCs and PEPs were also already used for performing algorithmic design in different settings, starting through the works by~\cite{drori2014performance,kim2016optimized,drori2016optimal} and taken further in different directions~\cite{taylor2017exact,kim2018another,van2018fastest,drori2018efficient,kim2021optimizing,kim2021accelerated,ryu2019finding}. The methodology was also used in the context of multiplicative gradient noise~\cite{de2017worst2,de2017worst,cyrus2018robust}, Bregman gradient methods~\cite{dragomir2021optimal}, and adaptive first-order methods~\cite{barre2020complexity}.

\subsection{Preliminary material}\label{sec:prelim} We denote by $\Fmu$ the set of closed proper $\mu$-strongly convex functions with $0\leq\mu<\infty$, and by $\Fccp$ the corresponding subset of closed, proper and convex functions. Depending on the context, we will also use the notation $\partial h(x)$ for denoting the subdifferential of $h$ at $x$, or for abusively denoting a particular subgradient of $h$ at $x$, for notational convenience. For $h\in\Fccp$, the proximal problem can be formulated through a primal, a saddle point, or a dual formulation, as follows:
\begin{align}
&\min_x \{\Phi_p(x;z)\equiv\lambda h(x)+\tfrac12\normsq{x-z}\} \tag{P}\label{eq:prox_primal}\\
&\max_v\min_x  \{\Phi(x,v;z)\equiv\lambda  h(x)+\inner{\lambda v}{z-x}-\tfrac12\normsq{\lambda v}\}\tag{SP}\label{eq:prox_saddle}\\
&\max_v \{\Phi_d(v;z)\equiv-\lambda h^*({v} )-\tfrac12\normsq{\lambda v-z}+\tfrac12\normsq{z}\}, \tag{D}\label{eq:prox_dual}
\end{align}
where $h^*\in\Fccp$ denotes the Fenchel conjugate of $h$.
In this setting, a sufficient condition for having no duality gap is that $\ri(\dom h)\neq \emptyset$ (see e.g.,~\cite[Corollary 31.2.1]{Book:Rockafellar}, or discussions in~\cite[Section 3.5]{chambolle2016introduction}). In the following sections, we examine natural approximate optimality conditions for those three problems. Let us recall a few relations between their optimal solutions. First, first-order optimality conditions along with Fenchel conjugation allows writing
\[ x=\prox_{\lambda h}(z) \Leftrightarrow \tfrac{z-x}{\lambda}\in\partial h(x) \Leftrightarrow x\in\partial h^*(\tfrac{z-x}{\lambda})\Leftrightarrow\tfrac{z-x}{\lambda}=\prox_{{h^*}/{\lambda}}(\tfrac{z}{\lambda}).\]
By noting the last equality can be written as $\tfrac{z-\prox_{\lambda h}(x)}{\lambda}=\prox_{{h^*}/{\lambda}}(\tfrac{z}{\lambda})$, we arrive to Moreau's identity 
\begin{equation}\label{eq:moreau}
\prox_{{\lambda}h}(z)+{\lambda}\prox_{h^*/{\lambda}}(\tfrac{z}{\lambda})=z\tag{Moreau}
\end{equation} and to the corresponding identity in terms of function values: \[h(\prox_{{\lambda}h}(z))+h^*(\prox_{h^*/{\lambda}}(\tfrac{z}{\lambda}))=\inner{\prox_{{\lambda}h}(z)}{\prox_{h^*/{\lambda}}(\tfrac{z}{\lambda})}.\]
Though not being mandatory for the understanding of the material covered in the sequel, a great deal of simplifications in the exposition (particularly in the algorithmic analyses) can be obtained through the notion of $\varepsilon$-subdifferentials~\cite{brondsted1965subdifferentiability}.
\begin{definition}[Section 3 of~\cite{brondsted1965subdifferentiability}]\label{def:epssubdif} Let $h\in\Fccp(\Rd)$. For any $\varepsilon\geq 0$, we denote by $\partial_{\varepsilon}h(x)$ the $\varepsilon$-subdifferential of $h$ at $x\in\Rd$:
\begin{equation*}
\begin{aligned}
\partial_{\varepsilon}h(x)&=\{g\,|\, h(z)\geq h(x)+\inner{g}{z-x}-\varepsilon\quad \forall z\in\Rd\}\\
&=\{g\,|\,h(x)+h^*(g)-\inner{g}{x}\leq \varepsilon\}.
\end{aligned}
\end{equation*}
Any $g\in\partial_{\varepsilon}h(x)$ is called an $\varepsilon$-subgradient of $h$ at $x\in\Rd$.
\end{definition}
Before finishing this section, let us note that the symmetry of the second equality in the definition implies $g\in\partial_\varepsilon h(x)\Leftrightarrow x\in\partial_{\varepsilon}h^*(g)$.

\section{Notions of inexactness for proximal operators}\label{sec:notions} Our main motivation in this section is to survey the main natural notions of inexact proximal operations that can be used in practical applications. In particular, when solving a proximal subproblem through an iterative method, we want to be able to assess the quality of an approximate solution. Therefore, it is important to have accuracy requirements that can be evaluated in practice, and which do not depend on quantities to are generally unknown to the user, such as the exact solution to the proximal subproblem, or an optimal function value. A natural way to design such candidates accuracy conditions is to inspect optimality conditions of the proximal subproblem, and to require our approximate solutions to the subproblems to satisfy them within an appropriate accuracy. We focus on the optimization settings, but many notions extend to
the monotone operator world either directly or using concepts of enlargements~\cite{burachik1997enlargement,burachik1998varepsilon}.

Before proceeding, note that all notions do not have the same practical implications, as some might for example require having access to the dual problem~\eqref{eq:prox_dual}, or having access to $h^*$, whereas other do not. In addition, it might be easy to find approximate solutions for certain accuracy requirements, but hard to find candidates for others, depending on the target application.

In this section, we propose a list of natural notions for measuring inaccuracies within proximal operations. Those notions are not new, and our intent here is to list them in a systematic way, and to show (in the next section) that worst-case analyses of natural algorithms relying on such notions can be studied by following the same \emph{principled} steps.

Our starting point is to express optimality conditions for the proximal subproblem in its different forms~\eqref{eq:prox_primal},~\eqref{eq:prox_saddle}, and~\eqref{eq:prox_dual}, as follows.
\begin{itemize}
    \item First-order optimality conditions of~\eqref{eq:prox_saddle} can be written as
    \[ \left\{\begin{array}{l}
    x=\prox_{\lambda h}(z)       \\
    v=\prox_{h^*/\lambda}(\tfrac{z}{\lambda}) 
    \end{array}\right.\Leftrightarrow 0\in\begin{pmatrix}\partial_x\Phi(x,v;z)\\ \partial_v(-\Phi(x,v;z))\end{pmatrix},\]
    which can equivalently be formulated as the optimality conditions of either~\eqref{eq:prox_primal} or~\eqref{eq:prox_dual}:
    \[ \left\{\begin{array}{l}
    0=\norm{w-v} \text{ for some } w\in\partial h(x) \text{ $\Leftrightarrow$ } 0=\norm{u-x} \text{ for some } u\in\partial h^*(v),  \\
    0=\norm{x-z+\lambda v}.
    \end{array}\right.\]
    \item Assuming no duality gap occurs between~\eqref{eq:prox_primal} and~\eqref{eq:prox_dual} (see Section~\ref{sec:prelim}), one can write the zeroth-order optimality conditions (i.e., the primal-dual gap) for~\eqref{eq:prox_saddle}
    \[ \left\{\begin{array}{l}
    x=\prox_{\lambda h}(z)       \\
    v=\prox_{h^*/\lambda}(\tfrac{z}{\lambda}) 
    \end{array}\right. \Leftrightarrow \Phi_p(x;z) - \Phi_d(v;z) = 0,\]
    which can explicitly be written as
    \[ \Phi_p(x;z) - \Phi_d(v;z)= \lambda h(x)+\lambda h^*({v} )- \lambda\inner{x}{v} + \tfrac{1}{2}\|x -z +\lambda v\|^2.\]
    We observe in the previous primal-dual gap expression that it decomposes as the sum of two nonnegative quantities $\lambda h(x)+\lambda h^*({v} )- \lambda\inner{x}{v}$ and $\tfrac{1}{2}\|x -z +\lambda v\|^2$.
    In particular, the first term controls how far is $v$ from $\partial h(x)$. Indeed, first-order optimality conditions applied to the definition of the Fenchel-Legendre transform (see e.g.,~\cite[Theorem 23.5]{Book:Rockafellar}) gives 
    \[0=\lambda h(x)+\lambda h^*({v} )- \lambda\inner{x}{v}\Leftrightarrow v\in\partial h(x)\Leftrightarrow x\in\partial h^*(v).\]
    Moreover, when this term is nonzero, one can express the relationship between $x$ and $v$ through $\varepsilon$-subdifferentials (see Definition~\ref{def:epssubdif}) as
    \[h(x)+ h^*({v} )- \inner{v}{x}\leq\varepsilon\Leftrightarrow v\in\partial_{\varepsilon}h(x)\Leftrightarrow x\in\partial_{\varepsilon}h^*(v).\]
    In other word, for any primal-dual pair $(x,v)$, $v$ is always an $\varepsilon$-subgradient of $h$ at $x$ with $\varepsilon = h(x) +h^*(v) - \inner{x}{v}$ (which is finite when $v\in\dom h^*$).
    \end{itemize}
    Those elements motivate measuring inaccuracies simultaneously in two ways:
    \begin{itemize}
        \item[(i)] requiring $\norm{x-z+\lambda v}$ being small enough---i.e., requiring~\eqref{eq:moreau} to hold approximately---, and
        \item[(ii)] requiring either $v$ being close enough to $\partial h(x)$, and/or how $x$ being close enough to $\partial h^*(v)$. Via the primal-dual gap formulation, this is done by requiring $h(x)+ h^*({v} )- \inner{v}{x}$ to be small enough. In first-order optimality conditions, this could be done by requiring $\norm{v-w}$ to be small enough for some $w\in\partial h(x)$ or $\norm{x-u}$ to be small enough for some $u\in\partial h^*(y)$.
    \end{itemize}
    Note that when either the candidate dual solution satisfies $v\in\partial h(x)$, or the candidate primal solution satisfies $x\in\partial h^*(v)$ (for example if the proximal subproblem is solved via a purely primal, or purely dual, method), then the only term that needs to be controlled is that of (i). In the case where either the approximate dual solution is chosen as $v=\tfrac{z-x}{\lambda}$ or the approximate primal solution is chosen as $x={z-\lambda v}$, the only term that needs to be controlled is~(ii), as~(i) is automatically $0$. In other cases, both terms need to be controlled.
  
    \subsection{A few observable notions of inexactness} In this section, we are interested in inexactness notions that do no require knowledge on $\prox_{\lambda h}(z)$ or $\prox_{h^*/\lambda}(\tfrac{z}{\lambda})$ to be evaluated. In what follows, we denote the primal-dual gap by
    \[ \PDg_{\lambda h}(x,v;z) := \Phi_p(x;z) - \Phi_d(v;z),\]
    and the Moreau gap by
    \[ \Mor_\lambda(x,v;z):=\normsq{x-z+\lambda v},\]
    for convenience, and we recall a property on the primal-dual gap that was stated earlier in Section~\ref{sec:notions} but that is key to compare it with $\varepsilon$-subgradient based criterion in the literature.
    \begin{lemma}\label{lem:pdgag-eps}
        Let $\varepsilon\geq0$, $x,v,z \in \Rd$. If $v \in \partial_\varepsilon h(x)$, then the following inequality holds
        \[\PDg_{\lambda h}(x,v;z) \leq \tfrac{1}{2}\Mor_{\lambda}(x,v;z) + \lambda\varepsilon.\]
        Furthermore, it holds with equality when $\varepsilon = h(x)+h^*(v)-\inner{x}{v}$.
        
        Reciprocally, let $\sigma \geq 0$, $x,v,z\in\Rd$, if $\PDg_{\lambda h}(x,v;z) \leq \sigma$ then,
        \[ v \in \partial_{\varepsilon_v}h(x), \quad \text{with } \varepsilon_v = \tfrac{\sigma}{\lambda} -\tfrac{1}{2\lambda}\Mor_{\lambda}(x,y;z).  \]
    \end{lemma}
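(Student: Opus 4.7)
The plan is to reduce everything to the decomposition of the primal-dual gap that was already spelled out in the text just before the statement, namely
\[ \PDg_{\lambda h}(x,v;z) = \lambda\bigl(h(x)+h^*(v)-\inner{x}{v}\bigr) + \tfrac{1}{2}\Mor_{\lambda}(x,v;z), \]
which follows directly from expanding $\Phi_p(x;z)-\Phi_d(v;z)$ and collecting the squared terms via $\tfrac12\normsq{x-z}+\tfrac12\normsq{\lambda v-z}-\tfrac12\normsq{z}=\tfrac12\normsq{x-z+\lambda v}-\lambda\inner{x}{v}+\lambda\inner{z}{0}$ (a one-line algebraic check). I would start the proof by restating this identity, since everything else is a consequence of it together with Definition~\ref{def:epssubdif}.

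For the forward direction, I would invoke the second characterisation of $\partial_\varepsilon h$ in Definition~\ref{def:epssubdif}: the hypothesis $v\in\partial_\varepsilon h(x)$ is exactly $h(x)+h^*(v)-\inner{x}{v}\leq\varepsilon$. Plugging this into the identity immediately yields $\PDg_{\lambda h}(x,v;z)\leq\lambda\varepsilon+\tfrac12\Mor_{\lambda}(x,v;z)$. The equality case when $\varepsilon=h(x)+h^*(v)-\inner{x}{v}$ is then tautological, so I would state it in one line. A small point worth mentioning is that this quantity is automatically nonnegative by the Fenchel--Young inequality, so the choice $\varepsilon\geq 0$ is admissible.

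For the reciprocal direction, I would just solve the identity for the Fenchel--Young residual:
\[ h(x)+h^*(v)-\inner{x}{v} = \tfrac{1}{\lambda}\PDg_{\lambda h}(x,v;z)-\tfrac{1}{2\lambda}\Mor_{\lambda}(x,v;z) \leq \tfrac{\sigma}{\lambda}-\tfrac{1}{2\lambda}\Mor_{\lambda}(x,v;z)=\varepsilon_v, \]
and then conclude via the second characterisation of $\partial_{\varepsilon_v}h(x)$ in Definition~\ref{def:epssubdif} that $v\in\partial_{\varepsilon_v}h(x)$. Again one should note that $\varepsilon_v\geq 0$ automatically, since $h(x)+h^*(v)-\inner{x}{v}\geq 0$ is nonnegative.

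There is essentially no hard step here: the whole proof is a two-line manipulation of the already-derived decomposition of the primal-dual gap, plus the Fenchel--Young reading of $\varepsilon$-subdifferentials. The only thing to be careful about is to make the identity for $\PDg_{\lambda h}$ explicit at the start, so that both implications become one-line rearrangements; the rest of the argument writes itself.
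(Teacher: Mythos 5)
Your proposal is correct and follows exactly the route the paper itself relies on: the lemma is an immediate consequence of the decomposition $\PDg_{\lambda h}(x,v;z)=\lambda\bigl(h(x)+h^*(v)-\inner{x}{v}\bigr)+\tfrac12\Mor_\lambda(x,v;z)$ stated just before it, combined with the Fenchel--Young characterisation of $\partial_\varepsilon h$ from Definition~\ref{def:epssubdif}, read in one direction or the other. The only cosmetic blemish is the spurious $\lambda\inner{z}{0}$ term in your algebraic check (it is zero, so harmless), and you rightly note that $\varepsilon_v\geq 0$ by Fenchel--Young, which the paper's statement leaves implicit.
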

    Therefore, imposing an upper bound on the right hand side, automatically imposes a bound on the primal-dual gap.
    We list a series of criterion that were used in different works for quantifying the quality of some primal-dual pair $(x,v)$ for approximating the pair $(\prox_{\lambda h}(z), \prox_{h^*/\lambda}(z/\lambda))$. In all the criteria that follow, $\sigma$ denotes an error magnitude that we do not specify for now as we focus on the left hand side of the inexactness criteria. 
\begin{itemize}
    \item (Primal-dual inaccuracy, take I) The quality of a primal-dual pair $(x,v)$ for approximating the couple $(\prox_{\lambda h}(z), \prox_{h^*/\lambda}(z/\lambda))$ can be monitored by requiring
    \[ \PDg_{\lambda h}(x,v;z) \leq \sigma,\]
    to hold for some predefined $\sigma\geq 0$. Using Lemma~\ref{lem:pdgag-eps}, one can reformulate this requirement as $\exists \varepsilon\geq 0$: $v\in\partial_\varepsilon h(x)$ and $\tfrac12\normsq{x-z+\lambda v}+\lambda \varepsilon \leq \sigma$. This criterion is used among others in the hybrid approximate extragradient (HPE) framework~\cite{solodov1999hybrid,solodov2000comparison,solodov2000error,solodov2001unified}, in its inertial/accelerated versions~\cite{monteiro2013accelerated,boct2015hybrid,alves2019inexact}, or for forward-backward splittings~\cite{millan2019inexact,Bello2020}. This criterion is generalized in the (monotone) operator world, through the notion of $\varepsilon$-enlargements~\cite{burachik1998varepsilon,burachik1997enlargement}, generalizing the notion of $\varepsilon$-subdifferentials.
\end{itemize}
 Stronger notions of primal-dual pairs can be obtained by coupling the primal and dual estimates, as follows.
 \begin{itemize}
    \item (Primal-dual inaccuracy, take II) The quality of a primal point $x$ for approximating $\prox_{\lambda h}(z)$ can be monitored by constructing an approximate dual point through~\eqref{eq:moreau}: $v=\tfrac{z-x}{\lambda}$ and requiring the corresponding primal-dual gap to satisfy
    \[ \PDg_{\lambda h}(x,\tfrac{z-x}{\lambda};z) \leq \sigma.\]
    Note that this formulation can be rewritten as $\PDg_{\lambda h}(x,\tfrac{z-x}{\lambda};z) = \lambda h(x) + \lambda h^*(\tfrac{z-x}{\lambda}) - \lambda\inner{x}{\tfrac{x-z}{\lambda}} \leq \sigma \Leftrightarrow \tfrac{z-x}{\lambda}\in \partial_{\sigma/\lambda} h(x)$, or equivalently $x = z - \lambda v$ with $v \in \partial_{\sigma/\lambda} h(x)$, or even in a dual form $v=\tfrac{z-u}{\lambda}$ with $u\in\partial_{\sigma/\lambda} h^*(v)$. This notion of inaccuracy was also used in quite a few works, see e.g.,~\cite{lemaire1992convergence,cominetti1997coupling} and more recently in~\cite{villa2013accelerated} and~\cite[``approximation of type 2'']{salzo2012inexact}.
    \item (Primal-dual inaccuracy, take III) The quality of a primal point $x$ for approximating $\prox_{\lambda h}(z)$ can be monitored by constructing an approximate dual point as $v=h'(x)\in\partial h(x)$ and by requiring
    \[\PDg_{\lambda h}(x,h'(x);z)\leq \sigma.\]
    In this case, the criterion can be written as $\PDg_{\lambda h}(x,h'(x);z) = \tfrac12\|x-z+\lambda h'(x)\|^2 \leq \sigma$, which is equivalent to $x = z - \lambda v + \lambda e$ with $v\in\partial h(x)$ and $\tfrac{\lambda^2}{2}\normsq{e}\leq \sigma$. This error criterion was among the first to be used, see~\cite{rockafellar1976monotone}, and was later used in many works, see e.g.,~\cite{burke1999variable,solodov1999hybridproj,solodov2000comparison,solodov2000error,eckstein1998approximate,alves2019relative}, and~\cite[``approximation of type 3'']{salzo2012inexact}.
 \end{itemize}
Among known methods for dealing with inexact proximal iterations, extra-gradient methods occupy an important place (see, e.g., the conceptual algorithm in~\cite{nemirovski2004prox}). Intuitively, the idea is to compute some intermediate point $u\approx \prox_{\lambda h}(z)$, to evaluate some $u'\in\partial h(u)$ (or an $\epsilon$-subgradient version of it), and to use $x=z-\lambda u'$ as our working approximation of $\prox_{\lambda h}(z)$. Natural notions of inaccuracy applied on $u$ can also then directly be interpreted in terms of $x$, as follows.
\begin{itemize}
    \item (Primal-dual inaccuracy, take IV) One way to interpret the hybrid proximal extra-gradient method~\cite{solodov1999hybrid} is that it measures the quality of a primal point $x$ for approximating $\prox_{\lambda h}(z)$ by requiring the existence of some other primal point $u$ satisfying
    \[ \PDg_{\lambda h}(u,\tfrac{z-x}{\lambda};z)\leq\sigma.\]
    Equivalently, one can write this condition as $\exists \varepsilon\geq 0$ and $\exists u\in\partial_\varepsilon h^*(\tfrac{z-x}{\lambda})$ such that $\tfrac12 \normsq{u-x}+\lambda \varepsilon \leq \sigma$, which we can also explicitly rewrite in an extra-gradient format as: $x = z - \lambda u'$ with $u' \in \partial_\varepsilon h(u) , \tfrac{1}{2}\|u-z+\lambda u'\|^2 + \lambda \varepsilon \leq \sigma$ for some feasible $u$. In other words, it corresponds to obtain a $u\approx\prox_{\lambda h}(z)$ according to the primal-dual inaccuracy criterion (take I) on $u$, and to use $x=z-\lambda u'$ as the working approximation of $\prox_{\lambda h}(z)$.
    \item (Primal-dual inaccuracy, take V) A stronger version of the previous construction for measuring inaccuracy of $x$ consists in picking $u\in \partial h^*(\tfrac{z-x}{\lambda})$ and requiring
    \[\PDg_{\lambda h}(u,\tfrac{z-x}{\lambda};z) = \tfrac12\|x - u\|^2 \leq \sigma.\]
    In this setting, one can rewrite $x = z - \lambda u'$ with $u'\in\partial h(u)$ with $\tfrac12\|x-u\|^2 \leq \sigma$. This condition was presented, and used, in~\cite{solodov2000inexact} (though not exactly using this viewpoint). This corresponds to apply the primal-dual inaccuracy criterion (take III) on $u\approx\prox_{\lambda h}(z)$, and to use $x=z-\lambda u'$ as the working approximation of $\prox_{\lambda h}(z)$. This criterion is also used in~\cite{eckstein2017approximate} for relatively inexact Douglas-Rachford and ADMM. 
\end{itemize}
Perhaps curiously, applying the same extra-gradient idea to primal-dual inaccuracy (take II), one recovers (take II) without any change.

One can then do the same exercise by requiring first-order optimality conditions to be approximatively satisfied. As previously explained, the corresponding notions of inexactness actually collapse with those based on primal-dual requirements as soon as either the dual variable is a subgradient of $h$ at the primal point $v\in\partial h(x)$, or equivalently when $x\in\partial h^*(v)$.
\begin{itemize}
    \item (Primal-dual subgradient residual) Among the many possibilities for quantifying the quality of a primal-dual pair $(x,v)$ as an approximation of the solution $(\prox_{\lambda h}(z),\prox_{h^*/\lambda}(z/\lambda))$, one probably natural criterion is to require
    \begin{equation*}\label{eq:inexact_max}
    \max\{ \norm{x-z+\lambda v},\,\norm{v-\partial h(x)},\,\norm{x-\partial (h^*)(v)}\}\leq \sigma.     
    \end{equation*} Another possibility is to require a positively weighted sum of those different terms to be small enough.
\end{itemize}
Note that one can design alternate criteria by performing conic combinations, intersections and unions of previous inaccuracy criteria. Finally, note that the choice of the most appropriate criterion depend on the application at hand (e.g., depending on the cost of obtaining an approximation satisfying the accuracy requirement, and on the cost of checking it).

\begin{remark}\label{rem:practical}
In practice, as soon as one can use a first-order (or higher-order) method for solving (P), (D) or (SP) there are often different ways to obtain primal-dual pairs (x,v) satisfying some primal-dual inexactness requirement. Depending on the application, $h^*$ and $\partial h^*$ might or might not be available, rendering some criterion irrelevant for that particular application.
In particular, it is common that \eqref{eq:prox_primal} can be solved approximately and one has access to elements of $\partial h (x)$. Then, criteria of the form $\PDg_{\lambda h}(x,\partial h(x);z) \leq \sigma$ can be used, as in \cite{alves2019relative}.
\end{remark}    
    
\subsection{Abstract, generally non-observable, notions of inexactness}
Some notions are more complicated to directly monitor in practice. However, they might allow modeling certain situations that are not covered by previous notions (such as dealing with possibly infeasible primal and dual solutions).
\begin{itemize}
	\item (Purely primal (or dual) inaccuracy) One can monitor the quality of 	
	an approximate $x\approx\prox_{\lambda h} (z)$ by requiring $x$ to satisfy, for some $\sigma\geq 0$ 
	\begin{equation*}
	\PDg_{\lambda h}(x,\prox_{{h^*}/{\lambda}}(\tfrac{z}{\lambda});z)=\Phi_p(x;z) - \Phi_p(\prox_{\lambda h}(z);z)\leq \sigma
	\end{equation*}
	This notion is directly considered, e.g., in~\cite{auslender1987numerical,schmidt2011convergence,lin2015universal,lin2018catalyst}, in~\cite[``approximation of type 1'']{salzo2012inexact}, and indirectly in other works (e.g.,~\cite[Lemma 3.1]{guler1992new}). Although it is mostly impractical (as it requires knowing the optimal value of the proximal subproblem), it can be verified indirectly via other error criterion (such as a primal-dual gap). In the same spirit, one could use purely dual requirements $\Phi_d(\prox_{{h^*}/{\lambda}}(z/\lambda);z) - \Phi_d(v;z)$.
	\item (Distance to the solution) A primal candidate $x\approx\prox_{\lambda h} (z)$ may be required to be close to $\prox_{\lambda h} (z)$. That is, for some $\sigma>0$, one may require
	\begin{equation*}
	\norm{x-\prox_{\lambda h}(z)}\leq\sigma.
	\end{equation*}
	Note that it corresponds to verify an approximate Moreau gap $\Mor_{\lambda}(x,v;z)$ with $v=\prox_{h^*/\lambda}(z/\lambda)$.
	This notion is also not new~\cite{rockafellar1976monotone,guler1992new}, and can also be verified indirectly, e.g., via $\tfrac12 \normsq{x-\prox_{\lambda h}(z)}\leq \PDg_{\lambda h}(x,\prox_{{h^*}/{\lambda}}(\tfrac{z}{\lambda});z)$.
	Its dual version $\norm{\prox_{h^*/\lambda}(z/\lambda) -v}$, or primal-dual notion $\normsq{x-\prox_{\lambda h}(z)}$ $+\lambda^2\normsq{\prox_{h^*/\lambda}(z/\lambda) -v}$  could also be considered.
\end{itemize}
\subsection{Absolute versus relative inaccuracies} Depending on algorithmic requirements, error tolerances might be specified in terms of absolute constants, or as functions of the state of the algorithm at hand. For example, a common situation is to choose some absolute constant $\sigma>0$, and to require $\PDg_{\lambda h}(x,v;z)\leq \sigma$,
where $\sigma$ should typically be chosen as a decreasing function of the iteration counter. A standard alternative is to pick a relative type of accuracy requirement, such as $\PDg_{\lambda h}(x,v;z)\leq \norm{x-z}^2$.
Both types of requirements are pretty standard, and were already stated in early developments on inexact proximal methods (see e.g.,~\cite[condition (A) or (B)]{rockafellar1976monotone}. Relative versions often offer the advantage of being simpler to tune, sometimes at the cost of worse performances, see e.g.,~\cite{lin2018catalyst}.

\section{Principled, and computer-assisted worst-case analyses}\label{sec:pep}
In this section, we show that a generic inexact proximal method can be analyzed using performance estimation problems. Those problems were introduced in~\cite{drori2014performance} for analyzing fixed-step first-order methods for smooth convex optimization, and were extended in a few directions since then, see \S``Computer-assisted analyses'' in Section~\ref{sec:prevworks}. 

In short, we provide a principled approach to obtain rigorous worst-case guarantees and the corresponding proofs for a class of inexact proximal methods. The idea is to formulate the problem of performing a worst-case analysis as an optimization problem, which can be solved numerically. Feasible points to this problem correspond to matching examples (i.e., worst-case instances: functions and iterates) and feasible points to the dual problem correspond to worst-case guarantees (i.e., proofs). The possibility of solving those problems numerically essentially allows \emph{sampling} worst-case examples and proofs for given problems and algorithmic parameters (for instance, step sizes and accuracy levels).

\subsection{A class of inexact proximal methods} In this section we consider the minimization problem
\[ \min_{x\in\Rd}h(x)\]
with $h\in\Fccp$ (a closed, proper, and convex function) and define a class of approximate proximal methods for solving this problem, along with a principled way of analyzing them.

\subsubsection{Fixed-step inexact proximal methods} Let $x_0\in\Rd$  be an initial point, and let $\{\lambda_{i}\}_{i}$ be a sequence of nonnegative step sizes. When exact proximal computations are available, 
a natural class of methods can be described by
\[    w_{k+1} = \prox_{\lambda_{k+1}h}\left(w_k - \sum_{i=1}^{k}\alpha_{k+1,i}v_i\right)\]
where $v_i \in \partial h(w_i)$ for $i=1,\hdots,k$ and $\{\alpha_{i,j}\}_{ij}$ is a sequence of parameters. 
In this setting, the next iterate of the method is obtained as the result of the proximal operator of $h$ applied to the previous iterate plus a linear combination of previously encountered subgradients. It can be reformulated as
\[    w_{k+1} = w_k - \sum_{i=1}^{k}\alpha_{k+1,i}v_i - \lambda_{k+1}v_{k+1}\]
where $v_{k+1}\in\partial h(w_{k+1})$, which corresponds to optimality conditions of the proximal subproblems. 

We extend this class of algorithms for inexact proximal computations by introducing some error terms $\{e_i\}_i$ in the previous formulation.
\[    w_{k+1} \approx \prox_{\lambda_{k+1}h}\left(w_k - \sum_{i=1}^{k}\alpha_{k+1,i}v_i -\sum_{i=0}^k\beta_{k+1,i}e_i\right)\]
where $v_i \in \partial h(w_i)$ for $i=1,\hdots,k$ and $\{\alpha_{i,j}\}_{ij}$, $\{\beta_{i,j}\}_{ij}$ are sequences of parameters. In particular, $\{\beta_{i,j}\}_{ij}$ allows the method to take into account the errors made in previous proximal computations. We disambiguate the $\approx$ notation by introducing an additional error term $e_{k+1}$ and define the class of \emph{fixed-step inexact proximal methods} as 
\begin{equation}\label{eq:generic-prox}
    w_{k+1} = w_k - \sum_{i=1}^{k}\alpha_{k+1,i}v_i -\sum_{i=0}^k\beta_{k+1,i}e_i - \lambda_{k+1}(v_{k+1}+e_{k+1}),
\end{equation}
where $v_{k+1}\in \partial h(w_{k+1})$. The error source in the proximal operation comes from the fact that $v_{k}+e_{k}$ does not necessarily belong to $\partial h(w_{k})$.  
For modelling the error incurred in the proximal operations, in particular the discrepancy between $v_k+e_k$ an $\partial h(w_k)$, we are allowed to use all notions from previous sections. We abstract this modelling
step by imposing on the iterates some (possibly vector) inequalities of the form
 \begin{equation}\label{eq:generic_inexact}
    \text{EQ}_{k}(w_0,\hdots,w_k,v_0,\hdots,v_k,e_0,\hdots,e_k,h(w_0),\hdots,h(w_k))\leq 0.
\end{equation}
For readability purposes, we abusively use $\text{EQ}_k$ without explicitly instantiating the inputs in what follows.

In addition, all the inexactness criteria of Section~\ref{sec:notions} share a common structure which we refer to as ``Gram-representable", as follows.
\begin{definition}
A criterion \eqref{eq:generic_inexact} is \textbf{Gram-representable} if it is affine in $h(w_0),$ $\hdots,$ $h(w_k)$ and in $\inner{x}{y}$ for all $x,y\in\{w_i\}_{i\in[0,k]}\cup\{v_i\}_{i\in[0,k]}\cup\{e_i\}_{i\in[0,k]}$.
\end{definition}
All methods in the form \eqref{eq:generic-prox} subject to Gram-representable \eqref{eq:generic_inexact} can be analyzed in a principled way using the performance estimation procedure presented in the next section. Furthermore, all inaccuracy criterion presented in Section~\ref{sec:notions} are actually Gram-representable.

\subsubsection{Examples}\label{sss:examples_algo} Before going into the analyses, let us provide a few examples of methods that fit into model~\eqref{eq:generic-prox} with Gram-representable models of the form~\eqref{eq:generic_inexact}. In all cases, we let $\{\lambda_k\}_{k}$ be a sequence of predefined step sizes.
\begin{itemize}
    \item The vanilla \emph{proximal minimization algorithm} is given by
    \[ x_{k+1}=x_k-\lambda_{k+1}v_{k+1}, \]
    with $v_{k+1}\in\partial h(x_{k+1})$. It fits in~\eqref{eq:generic-prox} with $\alpha_{i,j}=\beta_{i,j}=0$, as well as $e_k=0$ which can be transcribed into a Gram-representable \eqref{eq:generic_inexact}.
    
    \item The \emph{inexact proximal minimization algorithm} proposed in~\cite[Section 3]{rockafellar1976augmented} can be described by
    \[ x_{k+1}=x_k-\lambda_{k+1}(v_{k+1}+e_{k+1}), \]
    with $v_{k+1}\in\partial h(x_{k+1})$, with the error term $e_{k+1}$ being controlled via either \[\|e_{k+1}\|^2\leq\tfrac{\epsilon_{k+1}^2}{\lambda_{k+1}^2}, \quad \text{ or }\quad \|e_{k+1}\|^2\leq\tfrac{\delta_{k+1}^2}{\lambda_{k+1}^2}\|x_{k+1}-x_{k}\|^2\]  for some appropriate sequence $\{\epsilon_k\}_k$~\cite[Criterion (A')]{rockafellar1976augmented}, or $\{\delta_k\}_{k}$~\cite[Criterion (B')]{rockafellar1976augmented}. In both cases, the method fits into model~\eqref{eq:generic-prox} with $\alpha_{i,j}=\beta_{i,j}=0$ and a Gram-representable~\eqref{eq:generic_inexact}. Depending on how we decide to control the error, we can either pick $\text
    {EQ}_{k+1}=\normsq{e_{k+1}}-\tfrac{\varepsilon_{k+1}^2}{\lambda_{k+1}^2}$ or $\text
    {EQ}_{k+1}=\normsq{e_{k+1}}-\tfrac{\delta_{k+1}^2}{\lambda_{k+1}^2}\normsq{x_{k+1}-x_{k}}$. 
\end{itemize}
Many known proximal methods rely on using the past first-order information for improving convergence guarantees of the sequence iterates.
\begin{itemize}
    \item \emph{Güler proximal point algorithm} \cite[Section 6]{guler1992new} is defined as follow given $\beta_0 >0$, $y_0 = x_0\in \Rd$ and $\{\lambda_k\}_{k}$ a sequence of positive step sizes
    \[\left \{ \begin{array}{rcl}
         t_{k+1}&=& \tfrac{1+\sqrt{1+4t_k^2}}{2}  \\
         x_{k+1} & = & y_{k} - \lambda_{k+1} v_{k+1} \text{ with } v_{k+1} \in \partial h (x_{k+1})\\
         y_{k+1} & = & x_{k+1} + \tfrac{t_k-1}{t_{k+1}}(x_{k+1}-x_k) + \tfrac{t_k}{t_{k+1}}(x_{k+1}-y_k)
    \end{array}\right. \]
    One can substitute the $y_{k+1}$ by $x_{k+2}+\lambda_{k+2}v_{k+2}$ and $y_k$ by $x_{k+1}+\lambda_{k+1}v_{k+1}$ in the last definition, which leads to
    \[ x_{k+2} = \left(1 + \tfrac{t_k-1}{t_{k+1}} \right)x_{k+1} -\tfrac{t_k-1}{t_{k+1}}x_k - \tfrac{t_k\lambda_{k+1}}{t_{k+1}}v_{k+1} - \lambda_{k+2}v_{k+2}. \]
    This allows to show recursively that the $\{x_k\}_k$ belong to the class \eqref{eq:generic-prox}. Indeed $x_1 = x_0 - \lambda_1v_1$. Then suppose that $x_{k+1}$ has the form of \eqref{eq:generic-prox}, with $\beta_{k+1,i} = 0$ and $e_{k+1} =0$, then 
    \begin{align*}
        x_{k+2} =& x_{k+1} -\tfrac{t_k-1}{t_{k+1}}\left(\sum_{i=1}^k\alpha_{k+1,i}v_i+ \lambda_{k+1}v_{k+1} \right)- \tfrac{t_k\lambda_{k+1}}{t_{k+1}}v_{k+1} - \lambda_{k+2}v_{k+2}.
    \end{align*} 
    And we can identify $\alpha_{k+2,i} = \tfrac{t_k-1}{t_{k+1}}\alpha_{k+1,i}$ for $i=1\hdots k$ and $\alpha_{k+2,k+1} = \tfrac{t_k-1}{t_{k+1}}\lambda_{k+1}$, as well as $\beta_{k+2,i}=0$ and $e_{k+2}=0$. 
\end{itemize}
Other methods that fit in \eqref{eq:generic-prox} with Gram-representable inexactness criterion \eqref{eq:generic_inexact} include the \emph{hybrid approximate extragradient algorithm}~\cite{solodov1999hybrid} (details in Appendix~\ref{app:methods}), the \emph{inexact accelerated proximal point algorithm}~\emph{IAPPA1} and \emph{IAPPA2}~\cite{salzo2012inexact} (details in Appendix~\ref{app:methods}), \emph{A-HPE}~\cite{monteiro2013accelerated} (see details in Appendix~\ref{app:methods}), and \emph{Catalyst} \cite{lin2015universal}.

\subsection{Computing worst-case guarantees} In this section, we provide a principled approach for performing worst-case analyses of fixed-step inexact proximal methods written in terms of~\eqref{eq:generic-prox} and~\eqref{eq:generic_inexact}. Let $N\in \N$ and $R \in \R^*$, for simplicity of the exposition, we only consider worst-case guarantees of type
\begin{equation}\label{eq:wc_type} h(w_N)-h(w_\star)\leq C(N,R),\end{equation}
for all $h\in\Fccp(\Rd)$, $w_\star\in\argmin_x h(x)$, $w_0 \in \Rd$ such that $\normsq{w_0-w_\star}\leq R^2$, and $d\in\mathbb{N}$. Our  goal is then to compute values of $C(N,R)$, hopefully small and decreasing with $N$, for this inequality to be valid. This choice is made for simplicity purposes, and can be changed (e.g. Section \ref{sec:str-pep}); see discussions and examples in~\cite{taylor2017exact,taylor2017performance}.

Given a method in the form~\eqref{eq:generic-prox} (i.e., fixed $\{\alpha_{i,j}\}_{ij}$, $\{\beta_{i,j}\}_{ij}$) as well as inexactness criteria in the form \eqref{eq:generic_inexact} (i.e., fixed $\{\text{EQ}_i\}_i$), we formulate the problem of computing the smallest $C(N,R)$ such that~\eqref{eq:wc_type} is valid. For doing that, we look for the worst problem instance for guarantees of type~\eqref{eq:wc_type}, that is, a convex function on which $h(w_N)-h(w_\star)$ is the largest possible when $\normsq{w_0-w_\star}\leq R^2$
\begin{equation}\label{eq:pep}
\begin{aligned}
C(N,R)\geq\max_{\substack{d,h\\w_\star,w_0,\hdots,w_N\in\Rd\\v_0,\hdots,v_{N}\in\Rd\\e_0,\hdots,e_{N}\in\Rd}}& h(w_N)-h(w_\star)\\
\text{s.t. }& h\in\Fccp(\Rd), \quad w_\star\in\argmin_x h(x)\\
&\normsq{w_0-w_\star} \leq R^2\\
&w_1,\hdots,w_N \text{ satisfying } \eqref{eq:generic-prox}\\
&\text{EQ}_{k} \leq 0 \quad k=0,\hdots,N.
\end{aligned}
\end{equation}
This type of problems is often referred to as a \emph{performance estimation problem} (introduced in~\cite{drori2014performance}). It is intrinsically infinite dimensional, as it contains a variable $h\in\Fccp$. One possible way of dealing with this variable is to restrict ourselves to work with a discrete (or sampled) version of $h$. For doing that, we introduce a set $S$ containing sampled points of $h$, in the form $S=\{(w_i,v_i,h_i)\}_{i}$, and we reformulate the previous problem using the requirement $h_i=h(w_i)$, $v_i\in\partial h(w_i)$. In addition, \eqref{eq:generic_inexact} implies that the $\text{EQ}_k$ are only described using $\{e_i\}_i$ and the elements of $S$ (we emphasize this by writing $\text{EQ}_k(S,e)$), thus we can write
\begin{equation}\label{eq:pep_sampled}
\begin{aligned}
C(N,R)\geq\max_{\substack{d\\\ S\subset\Rd\times \Rd\times\mathbb{R}\\e_0,\hdots,e_{N}\in\Rd }}& h_N-h_\star&\\
\text{s.t. }&S=\{(w_i,v_i,h_i)\}_{i\in \{\star,0,1,\hdots,N\}} \\
&\exists h\in\Fccp: \, f=h(x),\, g\in\partial h(x) \quad \forall (x,g,f)\in S\\ 
&v_\star =0, \quad\normsq{w_0-w_\star} \leq R^2\\
&w_1,\hdots,w_N \text{ satisfying } \eqref{eq:generic-prox}\\
&\text{EQ}_{k}(S,e) \leq 0 \quad k=0,\hdots,N.
\end{aligned}
\end{equation}


Now, a key step is to rely on interpolation (also often referred to as extension) theorems for formulating the existence constraints in a tractable way. Such results can be formulated as follows (see e.g.,~\cite[Theorem 1]{taylor2017smooth}) :
\begin{equation}\label{eq:interp}
\begin{aligned}
\exists h\in\Fccp: \, f=h(x),\quad &g\in\partial h(x) \quad \forall (x,g,f)\in S \\ &\Leftrightarrow f'\geq f+\inner{g}{x'-x} \quad \forall (x,g,f),(x',g',f')\in S.
\end{aligned}
\end{equation}
It allows arriving to a nearly quadratic problem (still dependent on a dimension variable $d$).
\begin{equation}\label{eq:pep-ppa}
\begin{aligned}
\max_{\substack{d\\\ S\subset\Rd\times \Rd\times\mathbb{R}\\e_0,\hdots,e_{N}\in\Rd }}& {h_N-h_\star}&\\
\text{s.t. }&S=\{(w_i,v_i,h_i)\}_{i\in \{\star,0,1,\hdots,N\}}\\
&f'\geq f+\inner{g}{x'-x} \quad \forall (x,g,f),(x',g',f')\in S\\ &v_\star=0,\quad \normsq{w_0-w_\star}\leq R^2\\
&w_1,\hdots,w_N \text{ satisfying } \eqref{eq:generic-prox}\\
&\text{EQ}_{k}(S,e) \leq 0  \quad k=0,\hdots,N.
\end{aligned}
\end{equation}

\begin{remark}\label{rem:epssubdiff}Let us note that inexactness requirements for proximal operators are often formulated through $\varepsilon$-subdifferentials. In order to simplify the performance estimation problems, one can use appropriate interpolation conditions for directly incorporating $\varepsilon$-subdifferentials. Since this interpolation result is rather a trivial extension of regular convex interpolation (see e.g.,~\cite[Theorem 1]{taylor2017smooth}), we provide it in Appendix~\ref{app:epssubdiff}.
\end{remark}

The next section presents how problem \eqref{eq:pep-ppa} can be reformulated as linear semidefinite program when the $\text{EQ}_k$ are Gram-representable.

\subsection{Semidefinite formulation}\label{ss:sdp}
Let
\[H = [h_0-h_\star,\,\,\,\,h_1-h_\star,\,\,\,\hdots\,\,\,h_N-h_\star] \in \R^{1\times(N+1)}\] 
a flat vector containing function values and 
\begin{align*}
G &= X^TX \succeq 0 \text{ with }\\
 X &= [w_\star,\,\,w_0,\,\,v_0,\,\,\hdots\,\,v_N,\,\,e_0,\,\,\hdots\,\,e_N] \in \R^{d\times (2N+4)}
\end{align*}    
a Gram matrix of the vector variables of \eqref{eq:pep-ppa}. For writing~\eqref{eq:pep-ppa} as a semidefinite program, let us introduce base vectors $\bww_k$, $\bvv_k$, and $\bee_k$ in $\mathbb{R}^{2N+4}$ for conveniently selecting entries of $X$, and $\bhh_k$ in $\mathbb{R}^{N+1}$ for selecting entries of $H$, such that
\begin{align*}
    w_k=X\bww_k,\, &v_k=X\bvv_k,\, e_k=X\bee_k,\\
    &h_k=H\bhh_k.
\end{align*}
More precisely, we pick $\bww_\star=\buu_1$, $\bvv_\star=0$, $\bww_0=\buu_2$, $\bvv_k=\buu_{k+3}$ ($k=0,\hdots,N$),  $\bee_k=\buu_{k+N+4}$ ($k=0,\hdots,N$) with $\buu_i$ the unit vector of $\R^{2N+4}$ with $1$ at its $i$th component. For $\bww_k$ ($k=1,\hdots,N$), we use~\eqref{eq:generic-prox} and write
\[   \bww_{k+1} = \bww_k - \sum_{i=1}^{k}\alpha_{k+1,i}\bvv_{i} - \sum_{i=0}^{k}\beta_{k+1,i}\bee_i - \lambda_{k+1}(\bvv_{k+1}+\bee_{k+1}).\]
For function values, we define $\bhh_\star=0$ and $\bhh_k=\buu_{k+1}$ ($k=0,\hdots,N$) with $\buu_i$ now in $\R^{N+1}$. In addition, when the constraints $\text{EQ}_k(S,e)\leq 0$ are Gram-representable, that is, each $\text{EQ}_k(S,e)\leq 0$ can be encoded as $m\in \N^*$ inequalities of the form $\text{EQ}_{k,i}=\mathrm{Tr}(A_{k,i} G)+Ha_{k,i} \leq b_{k,i}$ with $A_{k,i}\in\R^{(2N+4)\times(2N+4)},a_{k,i}\in\R^{N+1},b_{k,i}\in\R$ and $i\in[1,m]$, \eqref{eq:pep-ppa} can finally be reformulated as
\begin{equation}\label{eq:pep-SDP}
\begin{aligned}
\max_{G\succeq 0,\, H}\,\, &H({\bhh_N-\bhh_\star})&\\
\text{s.t. }&\;0\; \geq H(\bhh_i-\bhh_j)+\bvv_i^T G(\bww_j-\bww_i) \quad \forall i,j\in\{\star,0,\hdots,N\}\\
&R^2 \hspace{-0.04cm}\geq (\bww_0-\bww_\star)^T G (\bww_0-\bww_\star)\\
&\;0\; \geq -b_{i,j} + Ha_{i,j} + \mathrm{Tr}(A_{i,j} G)  \quad \forall i\in\{0,\hdots,N\},\,j\in\{1,\hdots,m\},
\end{aligned}
\end{equation}
which is a linear semidefinite program. Feasible points correspond to discrete version of functions $h\in\Fccp$, which can be constructed through convex interpolation mechanisms~\cite{taylor2017smooth}. 

\begin{remark}
The $b_{i,j}$ terms in the inexactness criterion is here to take into account possible absolute (non-homogeneous) error terms (i.e., independent of the iterates).
\end{remark}

We have seen how to solve numerically the performance estimation problem \eqref{eq:pep} using a semidefinite reformulation \eqref{eq:pep-SDP}. The objective of the next section is to present some duality arguments that allows to derive worst-case guarantees from feasible dual points of problem~\eqref{eq:pep-SDP}.

\subsection{Recovering worst-case guarantees from dual solutions}\label{ss:dual}

The worst-case guarantees presented in the sequel were found using dual certificates (i.e., dual feasible points) of problem \eqref{eq:pep-SDP}. In this section, we detail the relationship between such dual feasible points and traditional proofs not relying on semidefinite programming.

Let $\nu = \{\nu_{i,j}\}_{ij}$ be the nonnegative Lagrangian multipliers associated with the convex interpolation constraints and $\mu=\{\mu_{i,j}\}_{ij}$ the ones associated with inexactness constraints. We introduce the quantities
\[ \begin{array}{ccl}
     \tilde{H}(\nu,\mu) &=& \displaystyle\underset{i,j\in\{\star,0,\hdots,N\}}{\sum}\nu_{i,j}\left[(\bhh_i-\bhh_j)\right]
     +\sum_{\substack{i\in\{0,\hdots,N\}\\j\in\{1,\hdots,m\}}}\mu_{i,j}a_{i,j},  \\\\
     \tilde{G}(\nu,\mu) &=& \displaystyle\sum_{i,j\in\{\star,0,\hdots,N\}}\nu_{i,j}\left[(\bww_j-\bww_i)\bvv_i^T \right]+\sum_{\substack{i\in\{0,\hdots,N\}\\j\in\{1,\hdots,m\}}}\mu_{i,j}A_{i,j}, \\\\
     \tilde{B}(\mu) &=& \,\,\,\,\displaystyle\sum_{\substack{i\in\{0,\hdots,N\}\\j\in\{1,\hdots,m\}}}\mu_{i,j}b_{i,j},
\end{array}\]
and the Lagrangian of problem \eqref{eq:pep-SDP} can be expressed as
\begin{equation*}
    \begin{aligned}
    \mathcal{L}(G,H,\nu,\mu,\tau) =& H\left[\bhh_N-\bhh_\star- \tilde{H}(\nu,\mu) \right] + \tau R^2 + \tilde{B}(\mu) -\mathrm{Tr}\left(\left[\tilde{G}(\nu,\mu)+\tau (\bww_0-\bww_\star)
{(\bww_0-\bww_\star)}^T\right]G\right),
    \end{aligned}
\end{equation*}
where $\tau\geq0$ is the multiplier associated with the constraint on distance to optimality of the starting point.

Since the Lagrangian is linear in $G$ and $H$, maximizing with respect to $H$ and $G \succeq 0$ leads to the following dual function
\[\underset{\substack{G\succeq0\\H}}{\max}\;\mathcal{L}(G,H,\nu,\mu,\tau,D) = \left\{\begin{array}{cl}
     \tau R^2+\tilde{B}(\mu) & \text{ if }\bhh_N-\bhh_\star = \tilde{H}(\nu,\mu) \\
     &\text{ and }\tfrac{\tilde{G}(\nu,\mu)+\tilde{G}(\nu,\mu)^T}{2}+\tau (\bww_0-\bww_\star)
{(\bww_0-\bww_\star)}^T \succeq 0 \\
     +\infty & \text{ otherwise}
\end{array} \right.\]
and the corresponding dual problem
\begin{equation}\label{eq:pep-dual}
\begin{aligned}
\min_{\substack{\tau \geq 0,\\\nu \geq 0,\, \mu \geq 0}}\,\, &\tau R^2+\tilde{B}(\mu)&\\
\text{s.t. }&\bhh_N-\bhh_\star = \tilde{H}(\nu,\mu)\\
& \tfrac{\tilde{G}(\nu,\mu)+\tilde{G}(\nu,\mu)^T}{2}+\tau (\bww_0-\bww_\star)
{(\bww_0-\bww_\star)}^T \succeq 0.
\end{aligned}
\end{equation}
Therefore, for any feasible dual point $(\nu,\mu,\tau)$, the following inequality is valid for all $G\succeq0,\,H$
\[ \mathcal{L}(G,H,\nu,\mu,\tau) \leq \tau R^2+\tilde{B}(\mu), \]
which can be rewritten as 
\begin{align*}
    \mathcal{L}(G,H,\nu,\mu,\tau) - \tau R^2 + \tilde{B}(\mu) &= H\left[\bhh_N-\bhh_\star - \tilde{H}(\nu,\mu) \right]-\mathrm{Tr}\left(\left[\tilde{G}(\nu,\mu)+\tau(\bww_0-\bww_\star)
{(\bww_0-\bww_\star)}^T\right]G\right)\\
&\leq \;0.
\end{align*}
Going back to the notations of problem \eqref{eq:pep-ppa} the previous inequality is equivalent to 
\begin{equation}\label{eq:generic-proof}
\begin{aligned}
    h(w_N)-h_\star - \tau\normsq{w_0-w_\star} \leq& \sum_{i,j\in\{\star,0,\hdots,N\}}\nu_{i,j}[h(w_i)-h(w_j)+\inner{v_i}{w_j-w_i}]\\
    &+\sum_{\substack{i\in\{0,\hdots,N\}\\j\in\{1,\hdots,m\}}}\mu_{i,j}\text{EQ}_{i,j} + \tilde{B}(\mu)\\
    \leq& \;\tilde{B}(\mu),
\end{aligned}
\end{equation}
the last inequality comes from the fact that the dual variables are (element-wise) nonnegative, $v_i\in\partial h (w_i)$, and $\text{EQ}_{i,j}\leq 0$. Therefore, we get that 
\[h(w_N)-h_\star \leq \tau\normsq{w_0-w_\star} + \tilde{B}(\mu).\]
Thus, obtaining admissible dual points $\tau$, $\nu$, $\mu$ of problem \eqref{eq:pep-SDP} provides a way of combining interpolation inequalities and inexactness criterion such that \eqref{eq:generic-proof} is valid (examples of proofs relying on this mechanism can be found e.g.,~\cite{de2017worst,lieder_halpern,taylor2019stochastic}).
\begin{remark}
The quantity $\tau R^2+\tilde{B}(\mu)$ is always an upper-bound on $C(N,R)$ when $(\tau,\nu,\mu)$ is a feasible point of \eqref{eq:pep-dual}. Furthermore, under mild conditions for zero duality gap to occur (e.g., when Slater's condition holds for the primal problem), the smallest possible $C(N,R)$ satisfying \eqref{eq:wc_type} is exactly equal to $\tau_\star R^2+\tilde{B}(\mu_\star)$ where $(\tau_\star,\nu_\star,\mu_\star)$ is an optimal solution to~\eqref{eq:pep-dual}.
\end{remark}

\begin{remark}\label{rem:no-abs-error}
When there is no absolute error in the proximal computations (i.e., $b_{i,j}=0$) which corresponds to inequalities $\text{EQ}_k$ that are $1$-homogeneous in function values and $2$-homogeneous in vector variables,
then $\tilde{B}(\mu) = 0$ and the convergence guarantees have the standard form $h(w_N)-h_\star \leq \tau\normsq{w_0-w_\star}$. In addition, we notice that solutions to the dual problem \eqref{eq:pep-dual} are independent of $R$ and the optimal objective value is proportional to $R^2$.
\end{remark}

In the rest of the paper we use this framework to analyze some optimization methods with inexact proximal computations under different inexactness criteria. 

\subsection{Numerical examples} \label{subsec:ppa}
In this section we instantiate various inexact proximal minimization methods and exhibits numerical worst-case guarantees using the framework of Section~\ref{sec:pep}.
\subsubsection{A simple relatively inexact proximal point method}
The \emph{inexact proximal minimization algorithm} with fixed step size presented in Section~\ref{sss:examples_algo} corresponds to updates $w_{k+1} = w_k -\lambda(v_{k+1}+e_{k+1})$, with $v_{k+1}\in \partial h(w_{k+1})$, where we impose a criterion of the form (Primal-dual inaccuracy, take III) that is controlled relatively by the distance between two consecutive iterates. This corresponds to \[\text{EQ}_{k+1} =  \|e_{k+1}\|^2 -\tfrac{\sigma^2}{\lambda}\|w_{k+1}-w_{k}\|^2 \leq 0 \] for a fixed $\sigma \geq 0$. In this setting, problem \eqref{eq:pep-SDP} is of the form

\begin{equation}\label{eq:sdp-ppa}
\begin{aligned}
\max_{G\succeq 0,\, H}\,\, &H({\bhh_N-\bhh_\star})&\\
\text{s.t. }&\;0\; \geq H(\bhh_i-\bhh_j)+\bvv_i^T G(\bww_j-\bww_i) \quad \forall i,j\in\{\star,0,\hdots,N\}\\
&R^2 \hspace{-0.04cm}\geq (\bww_0-\bww_\star)^T G (\bww_0-\bww_\star)\\
&\;0\; \geq \bee_i^TG\bee_i - \tfrac{\sigma^2}{\lambda^2}(\bww_i - \bww_{i-1})^TG(\bww_{i}-\bww_{i-1}) \quad \forall i\in\{1,\hdots,N\},
\end{aligned}
\end{equation}
using notations of Section~\ref{ss:sdp}.

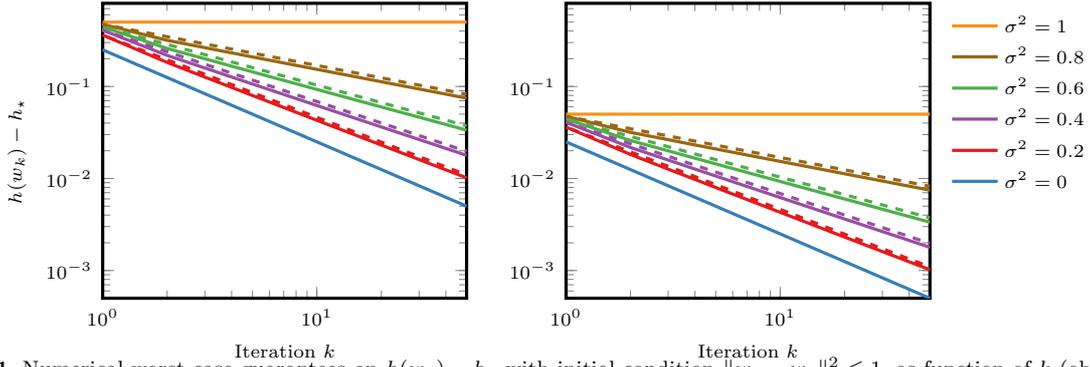
\begin{figure}[!ht]
\centering
\begin{tabular}{cc}
     \begin{tikzpicture}
			\begin{loglogaxis}[legend pos=outer north east, legend style={draw=none},legend cell align={left},plotOptions, ylabel={$h(w_{k})-h_\star$}, ymin=0.0005, ymax=0.8,xmin=1,xmax=50,width=.4\linewidth]
			\addplot [colorP6] table [x=k,y=wc_1]  {figMB/PPA_crit_grad_relat_dist_gamma_1.txt};
			\addplot [colorP5] table [x=k,y=wc_08]  {figMB/PPA_crit_grad_relat_dist_gamma_1.txt};
			\addplot [colorP4] table [x=k,y=wc_06]    {figMB/PPA_crit_grad_relat_dist_gamma_1.txt};
			\addplot [colorP3] table [x=k,y=wc_04]  {figMB/PPA_crit_grad_relat_dist_gamma_1.txt};
			\addplot [colorP2] table [x=k,y=wc_02]  {figMB/PPA_crit_grad_relat_dist_gamma_1.txt};
			\addplot [colorP1] table [x=k,y=wc_0] {figMB/PPA_crit_grad_relat_dist_gamma_1.txt};
			
			\addplot [color=colorP2, dashed, domain=0.01:50, samples=20] {(1+sqrt(0.2))/4/(x)^(sqrt(1-0.2))};
			\addplot [color=colorP3, dashed, domain=0.01:50, samples=20] {(1+sqrt(0.4))/4/(x)^(sqrt(1-0.4))};
			\addplot [color=colorP4, dashed, domain=0.01:50, samples=20] {(1+sqrt(0.6))/4/(x)^(sqrt(1-0.6))};
			\addplot [color=colorP5, dashed, domain=0.01:50, samples=20] {(1+sqrt(0.8))/4/(x)^(sqrt(1-0.8))};
			\end{loglogaxis}
		\end{tikzpicture} &
		\begin{tikzpicture}
			\begin{loglogaxis}[legend pos=outer north east, legend style={draw=none},legend cell align={left},plotOptions, ymin=0.0005, ymax=0.8,xmin=1,xmax=50,width=.4\linewidth]
			\addplot [colorP6] table [x=k,y=wc_1]  {figMB/PPA_crit_grad_relat_dist_gamma_10.txt};
			\addplot [colorP5] table [x=k,y=wc_08]  {figMB/PPA_crit_grad_relat_dist_gamma_10.txt};
			\addplot [colorP4] table [x=k,y=wc_06]    {figMB/PPA_crit_grad_relat_dist_gamma_10.txt};
			\addplot [colorP3] table [x=k,y=wc_04]  {figMB/PPA_crit_grad_relat_dist_gamma_10.txt};
			\addplot [colorP2] table [x=k,y=wc_02]  {figMB/PPA_crit_grad_relat_dist_gamma_10.txt};
			\addplot [colorP1] table [x=k,y=wc_0] {figMB/PPA_crit_grad_relat_dist_gamma_10.txt};
			
			\addlegendentry{$\sigma^2=1$}
			\addlegendentry{$\sigma^2=0.8$}
			\addlegendentry{$\sigma^2=0.6$}
			\addlegendentry{$\sigma^2=0.4$}
			\addlegendentry{$\sigma^2=0.2$}
			\addlegendentry{$\sigma^2=0$}
			
			\addplot [color=colorP2, dashed, domain=0.01:50, samples=20] {(1+sqrt(0.2))/4/(x)^(sqrt(1-0.2))/10};
			\addplot [color=colorP3, dashed, domain=0.01:50, samples=20] {(1+sqrt(0.4))/4/(x)^(sqrt(1-0.4))/10};
			\addplot [color=colorP4, dashed, domain=0.01:50, samples=20] {(1+sqrt(0.6))/4/(x)^(sqrt(1-0.6))/10};
			\addplot [color=colorP5, dashed, domain=0.01:50, samples=20] {(1+sqrt(0.8))/4/(x)^(sqrt(1-0.8))/10};
			\end{loglogaxis}
		\end{tikzpicture}\vspace{-.5cm}
\end{tabular}
		\caption{Numerical worst-case guarantees on $h(w_{k})-h_\star$ with initial condition $\normsq{w_0-w_\star}\leq 1$, as function of $k$ (obtained by solving semidefinite programs \eqref{eq:sdp-ppa}) for the relatively inexact proximal point algorithm of Section \ref{subsec:ppa}, with parameters $\lambda = 1$ (left), and $\lambda =10$ (right). The dashed lines are empirical upper bounds of the form $({1+\sigma})/({4\lambda k^{\sqrt{1-\sigma^2}}})$ which we plotted for reference. The semidefinite programs were solved through~\cite{Article:Yalmip} and~\cite{Article:Mosek}.} \label{fig:relative_PPA}
\end{figure}

One can now solve~\eqref{eq:sdp-ppa} numerically, for different values of $\sigma$, $\lambda$ and $R$, using standard semidefinite solvers (see e.g.; \cite{Article:Mosek,Article:Sedumi}). The corresponding numerical worst-case bounds are provided in Figure~\ref{fig:relative_PPA} for different parameter values. Based on numerical experiments, we conjecture the expression $R^2({1+{\sigma}})/({4\lambda N^{\sqrt{1-\sigma^2}}})$ to be a valid $C(N,R)$. For this example, we do not have a proof for this bound, as the algebra involved in obtaining an analytical form of a dual feasible point (as described in Section~\ref{ss:dual}) turned out to be quite complicated in our trials on this simple method.


This example illustrates how we can use the performance estimation approach to compute worst-case bounds numerically, even when rigorous analytical proofs seem out of reach.

\subsubsection{Inexact accelerated proximal point algorithms \emph{IAPPA}}
As detailed in Appendix~\ref{app:methods}, \emph{IAPPA1} and \emph{IAPPA2} from \cite[Section 5]{salzo2012inexact} fit into the formalism of Section~\ref{sec:pep}. In particular, one can apply Section~\ref{ss:sdp} to compute numerical worst-case guarantees, as provided in Figure~\ref{fig:IAPPA}. 

\begin{figure}[!ht]
\centering
\begin{tabular}{cc}
\begin{tikzpicture}
			\begin{loglogaxis}[legend pos=outer north east, legend style={draw=none},legend cell align={left},plotOptions, ymin=0.0002, ymax=5,xmin=1,xmax=120,width=.4\linewidth, ylabel={$h(x_{k})-h_\star$}]
			
			\addplot [colorP6] table [x=N,y=wc,col sep=comma] {figMB/IAPPA1_absolute_-1_PD_gapI-150-.txt};
			\addplot [colorP5] table [x=N,y=wc,col sep=comma] {figMB/IAPPA1_absolute_-1.5_PD_gapI-150-.txt};
			\addplot [colorP4] table [x=N,y=wc,col sep=comma] {figMB/IAPPA1_absolute_-2_PD_gapI-150-.txt};
			\addplot [colorP3] table [x=N,y=wc,col sep=comma] {figMB/IAPPA1_absolute_-3_PD_gapI-150-.txt};
			\addplot [colorP2] table [x=N,y=wc,col sep=comma] {figMB/IAPPA1_absolute_-4_PD_gapI-150-.txt};
			\addplot [colorP1] table [x=N,y=wc,col sep=comma] {figMB/IAPPA1_absolute_no-error-150.txt};

            \addplot[colorP1,dashed,domain=20:150,samples=10] {2.4/x^2};
			
			\end{loglogaxis}
\end{tikzpicture}
    &
		\begin{tikzpicture}
			\begin{loglogaxis}[legend pos=outer north east, legend style={draw=none},legend cell align={left},plotOptions, ymin=0.0002, ymax=5,xmin=1,xmax=120,width=.4\linewidth]
		
			\addplot [colorP6] table [x=N,y=wc,col sep=comma] {figMB/IAPPA1_absolute_-1_PD_gapIII-150.txt};
			\addplot [colorP5] table [x=N,y=wc,col sep=comma] {figMB/IAPPA1_absolute_-1.5_PD_gapIII-150.txt};
			\addplot [colorP4] table [x=N,y=wc,col sep=comma] {figMB/IAPPA1_absolute_-2_PD_gapIII-150.txt};
			\addplot [colorP3] table [x=N,y=wc,col sep=comma] {figMB/IAPPA1_absolute_-3_PD_gapIII-150.txt};
			\addplot [colorP2] table [x=N,y=wc,col sep=comma] {figMB/IAPPA1_absolute_-4_PD_gapIII-150.txt};
			\addplot [colorP1] table [x=N,y=wc,col sep=comma] {figMB/IAPPA1_absolute_no-error-150.txt};
			\addlegendentry{$\varepsilon_k = k^{-1}$}
			\addlegendentry{$\varepsilon_k = k^{-\tfrac{3}{2}}$}
			\addlegendentry{$\varepsilon_k = k^{-2}$}
			\addlegendentry{$\varepsilon_k = k^{-3}$}
			\addlegendentry{$\varepsilon_k = k^{-4}$}
			\addlegendentry{$\varepsilon_k = 0$}
			
			\addplot[colorP1,dashed,domain=20:150,samples=10] {2.4/x^2};
			\addlegendentry{$O(k^{-2})$}
			\end{loglogaxis}
		\end{tikzpicture}
\end{tabular}
		\caption{Numerical worst-case guarantees on $h(x_{k})-h_\star$ with initial condition $\normsq{w_0-w_\star}\leq 1$, as function of $k$ for \emph{IAPPA1} (left) and \emph{IAPPA2} (right), with constant step size equal to $1$ and $(\varepsilon_k)_k$ the sequence of parameters controlling the primal-dual gap values. We observe that for \emph{IAPPA1} (left), the cases $\varepsilon_k = k^{
-4}$ (red) and  $\varepsilon_k = k^{-3}$ (purple) seems to decrease as  $O(k^{-1})$ as stated in \cite[Theorem 4]{salzo2012inexact}. For \emph{IAPPA2} (right), \cite[Theorem 6]{salzo2012inexact} states that $\varepsilon_k = k^{
-4}$ (red) and $\varepsilon_k = k^{-3}$ (purple) curves of Figure~\ref{fig:IAPPA} (right) should exhibit a convergence in $O(k^{-2})$, as observed. More iterations might be needed to observe the same phenomenon for the $\varepsilon_k=k^{-2}$ (green).} \label{fig:IAPPA}
\end{figure}
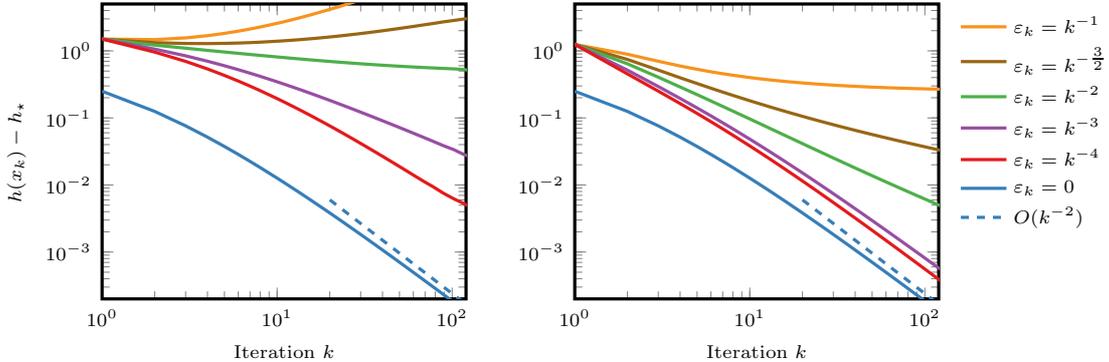
Regarding the numerical experiments, note that it might be delicate to deduce asymptotic convergence convergence rates by looking only at about a hundred of iterations. This is the limiting part of this approach: the number of constraints in the semidefinite problems defined in Section~\ref{ss:sdp} grows with the square of the number of iterations, which limits our capabilities of solving the corresponding problem. However, we can still make some observations, and sometimes deduce proofs (see Section~\ref{ss:dual}).

Let us compare numerical worst-case guarantees in Figure~\ref{fig:IAPPA} with convergence theorems  \cite[Theorem 4, Theorem 6]{salzo2012inexact} for \emph{IAPPA1} and \emph{IAPPA2}. First note that \cite[Theorem 4]{salzo2012inexact} states that primal gap in \emph{IAPPA1} converges to $0$ as soon as $\varepsilon_k = O(k^{-q})$ with $q>\tfrac{3}{2}$, which is compatible with numerical experiments in Figure~\ref{fig:IAPPA} (left). Reciprocally, it does seem that $q\leq \tfrac{3}{2}$ the worst-case guarantee does not converge to $0$, apparently tightening \cite[Theorem 4]{salzo2012inexact}. Similar observations hold for algorithm \emph{IAPPA2} (which involves a stricter inexactness requirements) with convergence of the primal gap for $q > 1/2$.

\subsubsection{Accelerated hybrid proximal Extragradient method (A-HPE)}
As detailed in Appendix~\ref{app:methods}, the \emph{A-HPE} method from \cite[Section 3]{monteiro2013accelerated} also fits into the formalism of Section~\ref{sec:pep}. In particular, one can apply Section~\ref{ss:sdp} to compute numerical worst-case guarantees that we provide in Figure~\ref{fig:AHPE}. 
\begin{figure}[!ht]
\centering
\begin{tabular}{cc}
\begin{tikzpicture}
			\begin{loglogaxis}[legend pos=outer north east, legend style={draw=none},legend cell align={left},plotOptions, ymin=0.001, ymax=0.5,xmin=1,xmax=20,width=.4\linewidth,ylabel={$h(y_k)-h_\star$}]
			
			\addplot [color=black, dashed,domain=1:10, samples=2]  {10};
			
			\addplot [colorP5] table [x=N,y=wc1,col sep=comma] {figMB/A_HPE_PD_gapI.txt};
			\addplot [colorP4] table [x=N,y=wc075,col sep=comma] {figMB/A_HPE_PD_gapI.txt};
			\addplot [colorP3] table [x=N,y=wc05,col sep=comma] {figMB/A_HPE_PD_gapI.txt};
			\addplot [colorP2] table [x=N,y=wc025,col sep=comma] {figMB/A_HPE_PD_gapI.txt};
			\addplot [colorP1] table [x=N,y=wc0,col sep=comma] {figMB/A_HPE_PD_gapI.txt};

			\addplot [color=black, dashed] table [x=N,y=Ak,col sep=comma] {figMB/A_HPE_PD_gapI.txt};
            
			\end{loglogaxis}
\end{tikzpicture}     &
\begin{tikzpicture}
			\begin{loglogaxis}[legend pos=outer north east, legend style={draw=none},legend cell align={left},plotOptions, ymin=0.001, ymax=0.5,xmin=1,xmax=20,width=.4\linewidth]
			
			\addplot [color=black, dashed,domain=1:10, samples=2]  {10};
            \addlegendentry{$\tfrac{1}{2A_k}$}
			
			\addplot [colorP5] table [x=N,y=wc1,col sep=comma] {figMB/A_HPE_PD_gapI_stepsize_10.txt};
			\addplot [colorP4] table [x=N,y=wc075,col sep=comma] {figMB/A_HPE_PD_gapI_stepsize_10.txt};
			\addplot [colorP3] table [x=N,y=wc05,col sep=comma] {figMB/A_HPE_PD_gapI_stepsize_10.txt};
			\addplot [colorP2] table [x=N,y=wc025,col sep=comma] {figMB/A_HPE_PD_gapI_stepsize_10.txt};
			\addplot [colorP1] table [x=N,y=wc0,col sep=comma] {figMB/A_HPE_PD_gapI_stepsize_10.txt};
			
            \addlegendentry{$\sigma = 1$}
			\addlegendentry{$\sigma = 0.75$}
			\addlegendentry{$\sigma = 0.5$}
			\addlegendentry{$\sigma = 0.25$}
			\addlegendentry{$\sigma = 0$}
			
			\addplot [color=black, dashed] table [x=N,y=Ak,col sep=comma] {figMB/A_HPE_PD_gapI_stepsize_10.txt};
            
			\end{loglogaxis}
\end{tikzpicture}
\end{tabular}

		\caption{Numerical worst-case guarantees on $h(y_k)-h_*$ with initial condition $\normsq{w_0-w_\star}\leq 1$, as function of $k$ for the \emph{A-HPE} method with constant step size $\lambda=1$ (left), and $\lambda=10$ (right). The dashed curve corresponds to a theoretical upper bound on the primal gap from \cite[Theorem 3.6]{monteiro2013accelerated}.} \label{fig:AHPE}
\end{figure}
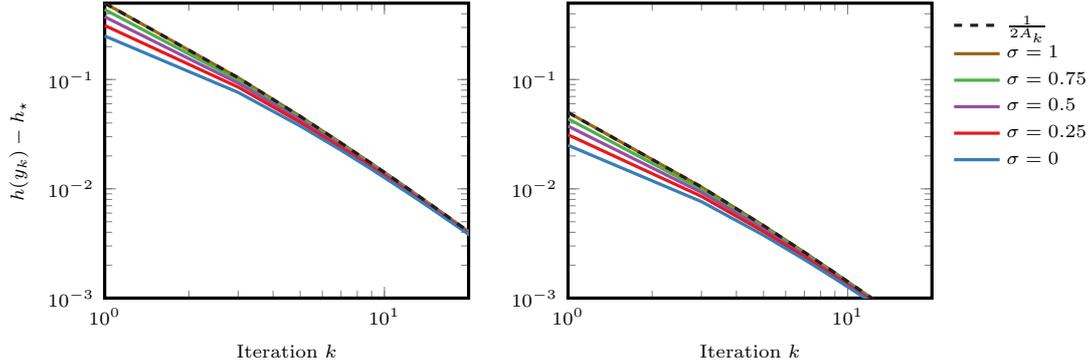

The numerical bounds on $h(y_k)-h_*$ that we obtain in Figure~\ref{fig:AHPE} for $\sigma = 1$ seems to match exactly the analytical bound $\tfrac{\normsq{w_0-w_\star}}{2A_k}$ provided in \cite[Theorem 3.6]{monteiro2013accelerated}. We further observe that numerical worst-case guarantees for all $\sigma\in[0,1]$ tend to match with this analytical bound when the number of iterations gets larger.

In the next section we describe an optimized relatively inexact proximal point method with worst-case behaviour derived from a dual feasible point, as previously described in Section~\ref{ss:dual}.
\section{An optimized relatively inexact proximal point algorithm}\label{sec:orip}
In this section we use the framework detailed in the Section~\ref{sec:pep} for designing an inexact proximal minimization algorithm with optimized worst-case performances. Similar to \eqref{eq:generic-prox}, provided sequences of step sizes $\{\lambda_i\}_i$ and parameters $\{\alpha_{i,j}\}_{ij}$, $\{\beta_{i,j}\}_{ij}$, we consider iterates of the form 
\begin{equation}\label{eq:generic-orip}
    \begin{aligned}
    x_{k+1} = x_k - \sum_{i=1}^k\alpha_{k+1,i}g_i - \sum_{i=1}^k\beta_{k+1,i}e_i - \lambda_{k+1}(g_{k+1}+e_{k+1}),
    \end{aligned}
\end{equation}
and impose an inexactness criterion of the form 
\[\PDg_{\lambda_{k}h}(x_{k},g_k;x_{k}+\lambda_k(g_k+e_k))\leq \tfrac{\sigma^2}{2}\normsq{\lambda_k(g_k+e_k)} , \quad k \geq 1. \]
This class of methods actually fits into \eqref{eq:generic-prox} and \eqref{eq:generic_inexact}, as shown in the next section.

Note that as mentioned in Remark~\ref{rem:no-abs-error}, in the absence of non-homogeneous error terms in the inexactness criteria (which is the case here) and given methods parameters, provable worst-case guarantees derived from dual certificates are independent of the bound on the initial distance to optimality $R$. Therefore, we fix $R=1$ in the performance estimation problems studied in this section for simplicity.

In order to find parameters $\{\alpha_{i,j}\}_{ij}$ and $\{\beta_{i,j}\}_{ij}$ that provide the smallest possible worst-case guarantees on $h(x_N)-h_\star$ after $N\in\N^*$ iterations, we define
\begin{equation}\label{eq:W}
\begin{aligned}
 W\left(\{\alpha_{i,j}\}_{ij},\{\beta_{i,j}\}_{ij}\right) := \max_{\substack{d,h\\x_\star,x_0,\hdots,x_{N}\in\Rd\\g_0,\hdots,g_{N}\in\Rd\\e_0,\hdots,e_{N}\in\Rd}}& h(x_{N})-h(x_\star)&\\
\text{s.t. }&h\in\Fccp(\Rd), \quad x_\star\in\argmin_x h(x)\\
& \normsq{x_0-x_\star} \leq 1\\
&x_1,\hdots,x_{N} \text{ satisfying } \eqref{eq:generic-orip}\\
&\PDg_{\lambda_{k}h}(x_{k},g_k;x_{k}+\lambda_k(g_k+e_k))\\
&\leq \tfrac{\sigma^2}{2}\normsq{\lambda_k(g_k+e_k)} \quad k=1,\hdots,{N},
\end{aligned}
\end{equation}
and wish to solve the following problem 
\begin{equation}\label{eq:orippep}
\begin{aligned}
 \underset{\{\alpha_{i,j}\}_{ij},\{\beta_{i,j}\}_{ij}}{\argmin}\;W\left(\{\alpha_{i,j}\}_{ij},\{\beta_{i,j}\}_{ij}\right).
\end{aligned}
\end{equation}


The rest of the section is organized as follow. First, we reformulate the method \eqref{eq:generic-orip} and problem \eqref{eq:W} for fitting into the setting and notations of Section~\ref{sec:pep}. Then, since solving \eqref{eq:orippep} exactly is out of reach in general, we detail a procedure to obtain feasible points (i.e., methods parameters) with optimized objective value. Finally, we present the method obtained from this choice of parameters together with its worst-case analysis.

\subsection{Reformulation as fixed-step inexact proximal methods}
The difference between \eqref{eq:generic-orip} and \eqref{eq:generic-prox} lies in the fact that we do not enforce $g_i \in \partial h(x_i)$ in the first model.
In order to cast \eqref{eq:generic-orip} into \eqref{eq:generic-prox}, we can define iterates as
\begin{equation}\label{eq:generic-prox-optim}
\left\{\begin{array}{ccl}
     w_{2k+1}&=& w_{2k}-e_{2k}  \\
     w_{2k+2}&=& w_{2k} - \sum_{i=1}^{k}\alpha_{k+1,i}v_{2i-1} - \sum_{i=1}^{k}\beta_{k+1}e_{2i-1} -\lambda_{k+1}(v_{2k+1}+e_{2k+1}), 
\end{array}\right.    
\end{equation}
with $v_{i} \in \partial h (w_i)$, which fits into \eqref{eq:generic-prox}.

The inexactness requirements are then, $\text{EQ}_0 =0$, $\text{EQ}_{2k+1}=0$ and 
\begin{align*}
    \text{EQ}_{2k+2} =& \PDg_{\lambda_{k+1}h}(w_{2k+2},v_{2k+1},w_{2k+2}+\lambda_{k+1}(v_{2k+1}+e_{k+1})) - \tfrac{\sigma^2}{2}\normsq{\lambda_{k+1}(v_{2k+1}+e_{2k+1})}.
\end{align*}
Since $v_{2k+1}\in \partial h (w_{2k+1})$, we can write \[h^*(v_{2k+1}) = \inner{v_{2k+1}}{w_{2k+1}} - h(w_{2k+1}),\] in the primal-dual gap and thus
\begin{align*}
\text{EQ}_{2k+2}  =& \tfrac{\lambda_{k+1}}{2}\normsq{ e_{2k+1}} - \tfrac{\lambda_{k+1}\sigma^2}{2}\normsq{v_{2k+1}+e_{2k+1}}+h(w_{2k+2})-h(w_{2k+1}) - \inner{v_{2k+1}}{w_{2k+2}-w_{2k+1}}.    
\end{align*}
which is Gram-representable. 
Finally, we can identify iterates $\{w_{2k}\}_k$ with the $\{x_k\}_k$ from \eqref{eq:generic-orip} and we have
\begin{equation}\label{eq:design-orip}
\begin{aligned}
W\left(\{\alpha_{i,j}\}_{ij},\{\beta_{i,j}\}_{ij}\right) =\max_{\substack{d,h\\w_\star,w_0,\hdots,w_{2N}\in\Rd\\v_0,\hdots,v_{2N}\in\Rd\\e_0,\hdots,e_{2N}\in\Rd}}& h(w_{2N})-h(w_\star)&\\
\text{s.t. }&h\in\Fccp(\Rd), \quad w_\star\in\argmin_x h(x)\\
& \normsq{w_0-w_\star} \leq 1\\
&w_1,\hdots,w_{2N} \text{ satisfying } \eqref{eq:generic-prox-optim}\\
&\text{EQ}_k \leq 0 \quad k=0,\hdots,{N}.
\end{aligned}
\end{equation}

In the following we first give a high level overview of how we can use a relaxation of \eqref{eq:design-orip} inside the minimization problem \eqref{eq:orippep} to get a feasible point with optimized worst-case bound, and then present the algorithm obtained with this choice of optimized parameters together with its sharp convergence guarantees (sharp in the sense that given $\{\lambda_k\}_k$ and $N$ we can find a function for which the worst-case guarantee is attained exactly).
\subsection{Obtaining optimized parameters}\label{ss:optim}
Problem \eqref{eq:orippep} can be formulated as a linear minimization problem under a bilinear matrix inequality, which is NP-hard in general (see e.g., \cite{toker1995np}). Thus, we approximate it by using a technique similar to that of~\cite{drori2014performance,kim2016optimized}, which consists in four steps.
\begin{enumerate}[label=(\roman*)]
    \item  Find a suitable relaxation of the inner maximization problem \eqref{eq:design-orip} i.e., only keep a subset of the interpolation constraints. This relaxation is chosen by a numerical trial and error procedure.
    \item Dualize the relaxed semidefinite formulation of the inner maximization problem to obtain a two-level minimization problem.
    \item Use a change of variable similar to that in~\cite[Section 5]{drori2014performance} to remove nonlinear terms in the bilinear semidefinite problem obtained at the previous step.
    \item Retrieve a feasible point of \eqref{eq:orippep} from the solution of the problem obtained in step (iii).
\end{enumerate}
     The final choice for the relaxation of \eqref{eq:design-orip} consisted in using only the following interpolation inequalities:
\begin{itemize}
\item convexity inequality between $w_{2k}$ and $w_{2k+1}$
\[h(w_{2k})\geq h(w_{2k+1})+\inner{v_{2k+1}}{w_{2k}-w_{2k+1}}, \]
\item convexity inequality between $w_\star$ and $w_{2k+1}$ 
\[h(w_\star)\geq h(w_{2k+1})+\inner{v_{2k+1}}{w_\star-w_{2k+1}}, \]
\item convexity inequality between $w_{2k+2}$ and $w_{2k+1}$ 
\[h(w_{2k+2})\geq h(w_{2k+1})+\inner{v_{2k+1}}{w_{2k+2}-w_{2k+1}}, \]
\end{itemize}
along with inexactness conditions $\text{EQ}_k$. Those are exactly the inequalities used in the proof in next section. 

More precisely, step (i) consisted in replacing $W\left(\{\alpha_{i,j}\}_{ij},\{\beta_{i,j}\}_{ij}\right)$ in problem \eqref{eq:orippep} by a relaxed version $U\left(\{\alpha_{i,j}\}_{ij},\{\beta_{i,j}\}_{ij}\right)$ defined in its semidefinite form as follow 
\begin{equation}\label{eq:sdp-orip}
\begin{aligned}
U\left(\{\alpha_{i,j}\}_{ij}\right.&\left.,\{\beta_{i,j}\}_{ij}\right):=\\
\max_{G\succeq 0,\, H}\,\, &H({\bhh_{2N}-\bhh_\star})&\\
\text{s.t. }&0 \geq H(\bhh_{2i+1}-\bhh_{2i})+\bvv_{2i+1}^T G(\bww_{2i}-\bww_{2i+1}) \quad \forall i\in\{1,\hdots,N-1\}\\
&0 \geq H(\bhh_{2i-1}-\bhh_\star)+\bvv_{2i-1}^T G(\bww_\star-\bww_{2i-1}) \quad \forall i\in\{1,\hdots,N\}\\
&0 \geq H(\bhh_{2i}-\bhh_{2i-1})+\bvv_{2i-1}^T G(\bww_{2i}-\bww_{2i-1}) \quad \forall i\in\{1,\hdots,N\}\\
&1 \geq (\bww_0-\bww_\star)^T G (\bww_0-\bww_\star)\\
&0 \geq \tfrac{\lambda_{i}}{2}\bee_{2i-1}^TG\bee_{2i-1} + H(\bhh_{2i}-\bhh_{2i-1}) - \bvv_{2i-1}^TG(\bww_{2i}-\bww_{2i-1})\\
&\;\;\;\;\;  - \tfrac{\sigma^2\lambda_{i}}{2}(\bee_{2i-1}+\bvv_{2i-1})^TG(\bee_{2i-1}+\bvv_{2i-1}) \quad \forall i\in\{1,\hdots,N\},
\end{aligned}
\end{equation}

Then, step (ii) consisted in dualizing the maximization problem as seen in Section~\ref{ss:dual}. From there, we search for parameters $\{\alpha_{i,j}\}_{ij},\{\beta_{i,j}\}_{ij}$ that minimize the optimal value of the dual of \eqref{eq:sdp-orip}. This is a minimization problem in $\{\alpha_{i,j}\}_{ij},\{\beta_{i,j}\}_{ij}$ and in the dual variables of \eqref{eq:sdp-orip}, that contains bilinear terms. 

In step (iii), the bilinear terms in the minimization problem of step (ii) are replaced by new variables, producing a linear semidefinite program that can be solved efficiently.

Finally, in the last step, we retrieve parameters $\{\tilde{\alpha}_{i,j}\}_{ij}$ and $\{\tilde{\beta}_{i,j}\}_{ij}$ from the solutions of the linear semidefinite program of step (iii). Note that the relaxation step (i) is chosen so that step (iv) is achievable.

In the following, we describe the algorithm obtained from the choice of parameters $\{\tilde{\alpha}_{i,j}\}_{ij}$, $\{\tilde{\beta}_{i,j}\}_{ij}$.
\subsection{Algorithm and convergence guarantees}

\begin{oframed}
	\textbf{Optimized relatively inexact proximal point algorithm (ORI-PPA)}
	\begin{itemize}
		\item[] Input: $h\in\Fccp(\Rd)$, $x_0\in\mathbb{R}^d$, $\sigma \in[0,1]$. 
		\item[] Initialization: $z_0=x_0$, $A_0=0$. 
		\item[] For $k=0,1,\hdots$:
		\begin{equation}\label{eq:orip}\tag{ORI-PPA}
			\begin{aligned}
			\text{Choose } &\lambda_{k+1} \geq 0 \\
			A_{k+1}&=A_k + \tfrac{\lambda_{k+1}+\sqrt{4\lambda_{k+1}A_{k}+\lambda_{k+1}^2}}{2}\\
			y_{k}&=x_k+\tfrac{\lambda_{k+1}}{A_{k+1}-A_k}(z_k-x_{k})\\
			\left[\text{Obtain }\right.(x_{k+1},& \;g_{k+1})\approx\left(\prox_{\lambda_{k+1} h}(y_{k}),\prox_{h^*/\lambda_{k+1}}(\tfrac{y_{k}}{\lambda_{k+1}})\right) \\ 
			\text{which satisfies} &\left.\text{ $\PDg_{\lambda_{k+1} h}(x_{k+1},g_{k+1};y_k) \leq \tfrac{\sigma^2}{2} \|x_{k+1}-y_k\|^2$}\right]\\
			{z}_{k+1}&=z_{k}-\tfrac{2(A_{k+1}-A_k)}{1+\sigma}g_{k+1}\\
			\end{aligned}
			\end{equation}
			\item[] Output: $x_{k+1}$
		\end{itemize}  
	\end{oframed}

Perhaps luckily, it turns out that the parameters $\{\tilde{\alpha}_{i,j}\}_{ij}$ and $\{\tilde{\beta}_{i,j}\}_{ij}$ obtained from the four step procedure of Section~\ref{ss:optim} follow recursive equations allowing to rewrite iterations \eqref{eq:generic-prox} under a more compact form as presented in Algorithm~\eqref{eq:orip} above. As mentionned earlier the iterates $\{x_k\}_k,\{g_{k+1}\}_k$ corresponds to the $\{w_{2k}\}_k,\{v_{2k+1}\}_k$ from \eqref{eq:generic-prox-optim} using$\{\tilde{\alpha}_{i,j}\}_{ij}$ and $\{\tilde{\beta}_{i,j}\}_{ij}$.

The Algorithm~\eqref{eq:orip} is actually almost the same as the A-HPE algorithm from \cite{monteiro2013accelerated} (in particular definitions of sequences $\{y_k\}_k$, $\{z_k\}_k$ are the same when $\sigma = 1$). The main differences reside in the inexactness criterion, as we prefer to use primal-dual formulation rather than using $\varepsilon$-subgradients, and in the fact that \eqref{eq:orip} uses explicitly the inexactness level $\sigma$ in its step sizes. This last difference allows to improve the worst-case guarantee by a constant factor $\tfrac{1+\sigma}{2}\leq 1$ compared to \cite[Theorem 3.6]{monteiro2013accelerated}. 

Perhaps surprisingly, this method reduces to that of G\"uler~\cite[Section 6]{guler1992new} when using exact proximal operations ($\sigma = 0$) and constant step size, although the current method was obtained by crude numerical optimization of its parameters (see Appendix~\ref{app:guler-eq} for details).

Solving numerically the dual of \eqref{eq:sdp-orip} allows to obtain rather simple analytical form for the optimal dual variables. We use these multipliers as in Section~\ref{ss:dual}, to prove the following theorem.

\begin{theorem} \label{thm:orip2}Let $h\in\Fccp$, a sequence of step sizes $\{\lambda_k\}_k$ with $\lambda_k > 0$, and $\sigma\in[0,1]$. For any starting point $x_0\in\Rd$, $N\geq 1$, the iterates of \eqref{eq:orip} satisfy
\[h(x_N)-h(x_\star)\leq \tfrac{(1+\sigma)\normsq{x_0-x_\star}}{4 A_N}, \]
with $x_\star \in \argmin_x h(x)$. Furthermore, this bound is tight: for all $\{\lambda_k\}_k$ with $\lambda_k>0$, $\sigma\in[0,1]$, $d\in\mathbb{N}$, $x_0\in\mathbb{R}^d$, and $N\in\mathbb{N}$, there exists $h\in\Fccp(\Rd)$ such that this bound is achieved with equality.
\end{theorem}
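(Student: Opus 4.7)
The plan is to establish the upper bound via a Lyapunov (potential) argument and the tightness via an explicit worst-case construction enabled by convex interpolation, exactly as the PEP design procedure of Sections~\ref{ss:dual} and~\ref{ss:optim} suggests.

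For the upper bound, first observe that the update for $A_{k+1}$ is equivalent to the identity $(A_{k+1}-A_k)^2 = \lambda_{k+1}A_{k+1}$, and that the extrapolation can be rewritten as $A_{k+1}y_k = A_kx_k + (A_{k+1}-A_k)z_k$. Setting $a_{k+1} := A_{k+1}-A_k$, the natural potential to consider is
\[\Phi_k := A_k\bigl(h(x_k)-h(x_\star)\bigr) + \tfrac{1+\sigma}{4}\normsq{z_k-x_\star},\]
which satisfies $\Phi_0 = \tfrac{1+\sigma}{4}\normsq{x_0-x_\star}$ since $A_0=0$ and $z_0=x_0$. The conclusion follows by telescoping once one shows $\Phi_{k+1}\leq \Phi_k$ for all $k\geq 0$, together with the trivial lower bound $\Phi_N \geq A_N(h(x_N)-h(x_\star))$.

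To establish monotonicity, I would first invoke Lemma~\ref{lem:pdgag-eps} to rewrite the inexactness criterion as $g_{k+1}\in\partial_{\varepsilon_{k+1}}h(x_{k+1})$ with $\varepsilon_{k+1}=h(x_{k+1})+h^*(g_{k+1})-\inner{g_{k+1}}{x_{k+1}}$ satisfying $\lambda_{k+1}\varepsilon_{k+1} + \tfrac{1}{2}\normsq{x_{k+1}-y_k+\lambda_{k+1}g_{k+1}} \leq \tfrac{\sigma^2}{2}\normsq{x_{k+1}-y_k}$. I would then combine the two $\varepsilon$-subgradient inequalities at $x_{k+1}$ (tested against $x_\star$ and $x_k$, weighted respectively by $a_{k+1}$ and $A_k$) with the expansion of $\normsq{z_{k+1}-x_\star}-\normsq{z_k-x_\star}$ coming from $z_{k+1}=z_k-\tfrac{2a_{k+1}}{1+\sigma}g_{k+1}$. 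The coefficient $\tfrac{1+\sigma}{4}$ in $\Phi_k$ is precisely tuned so that all $x_\star$-dependent terms cancel; using then $A_{k+1}y_k = A_kx_k + a_{k+1}z_k$ and $a_{k+1}^2=\lambda_{k+1}A_{k+1}$, the desired $\Phi_{k+1}\leq\Phi_k$ reduces to the scalar inequality
\[\varepsilon_{k+1} + \inner{g_{k+1}}{x_{k+1}-y_k} + \tfrac{\lambda_{k+1}}{1+\sigma}\normsq{g_{k+1}}\leq 0.\]
The main obstacle is deducing this scalar inequality from the inexactness criterion, since the criterion only carries a coefficient $\tfrac{\lambda_{k+1}}{2}$ in front of $\normsq{g_{k+1}}$ rather than the desired $\tfrac{\lambda_{k+1}}{1+\sigma}$. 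The decisive observation is that the inexactness also entails $\norm{x_{k+1}-y_k+\lambda_{k+1}g_{k+1}}\leq \sigma\norm{x_{k+1}-y_k}$, whence the triangle inequality yields $\norm{\lambda_{k+1}g_{k+1}}\leq (1+\sigma)\norm{x_{k+1}-y_k}$; this slack, substituted into the rearranged inexactness identity $\varepsilon_{k+1}+\inner{g_{k+1}}{x_{k+1}-y_k}+\tfrac{\lambda_{k+1}}{2}\normsq{g_{k+1}} \leq -\tfrac{1-\sigma^2}{2\lambda_{k+1}}\normsq{x_{k+1}-y_k}$, absorbs the additional $\tfrac{(1-\sigma)\lambda_{k+1}}{2(1+\sigma)}\normsq{g_{k+1}}$ term and closes the argument (with the case $\sigma=1$ being immediate).

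For tightness, the PEP duality of Section~\ref{ss:dual} guarantees that the dual certificate built from the multipliers above is optimal for~\eqref{eq:sdp-orip}; hence an extremal primal instance exists which saturates every inequality used in the monotonicity argument. Explicitly, I would collect a finite set of points $S=\{(x_i,g_i,h_i)\}$ at which the three families of convexity inequalities listed in Section~\ref{ss:optim} and the inexactness criteria hold with equality, and invoke the convex interpolation theorem~\eqref{eq:interp} to extend $S$ to a bona fide $h\in\Fccp(\Rd)$. A low-dimensional piecewise-affine (or piecewise-quadratic) function realizes this worst case; on it, running~\eqref{eq:orip} with a consistent inexact oracle produces iterates that achieve equality in the stated bound for the prescribed $\{\lambda_k\}_k$, $\sigma$, and $x_0$.
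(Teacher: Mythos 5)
Your upper-bound argument is correct, and I checked the key reductions: the identity $A_{k+1}y_k=A_kx_k+(A_{k+1}-A_k)z_k$ holds via $(A_{k+1}-A_k)^2=\lambda_{k+1}A_{k+1}$, the coefficient $\tfrac{1+\sigma}{4}$ does cancel the $x_\star$ terms, and your scalar inequality $\varepsilon_{k+1}+\inner{g_{k+1}}{x_{k+1}-y_k}+\tfrac{\lambda_{k+1}}{1+\sigma}\normsq{g_{k+1}}\leq 0$ indeed follows from the rearranged criterion plus the triangle-inequality bound $\norm{\lambda_{k+1}g_{k+1}}\leq(1+\sigma)\norm{x_{k+1}-y_k}$ (which itself uses $\varepsilon_{k+1}\geq 0$ to drop the $\lambda\varepsilon$ term). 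In substance this is the same certificate as the paper's: your two $\varepsilon$-subgradient inequalities with weights $a_{k+1}$ and $A_k$, the inexactness requirement, and $\varepsilon_{k+1}\geq 0$ correspond exactly (up to the global factor $\tfrac{1}{1+\sigma}$) to the paper's four families of convexity/inexactness inequalities involving the auxiliary points $u_{k+1}\in\partial h^*(g_{k+1})$. What differs is the packaging: the paper aggregates everything into one weighted sum and verifies the telescoping by induction with explicit square completions ($\normsq{e_i+\tfrac{\sigma}{1+\sigma}g_i}$ terms), whereas you phrase it as a one-step decrease of the potential $\Phi_k$ and close the estimate with Cauchy--Schwarz. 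Your version is arguably cleaner and handles $\sigma=0$ uniformly (the paper must treat $\sigma=0$ separately because its multipliers $\nu_{k,k},\nu_k$ blow up as $\sigma\to 0$), at the cost of hiding the sum-of-squares structure that the SDP dual makes explicit.

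The tightness part, however, has a genuine gap. You assert that ``the dual certificate built from the multipliers above is optimal for~\eqref{eq:sdp-orip}, hence an extremal primal instance exists.'' Feasibility of a dual point only gives an \emph{upper} bound; its optimality is precisely what exhibiting a matching primal instance would establish, so the argument as written is circular. Invoking strong duality for~\eqref{eq:sdp-orip} does not rescue this either: even with zero duality gap and attainment, that would only show your bound equals the optimal value of the \emph{relaxed} problem $U$, which a priori upper-bounds the true worst case $W$ over $\Fccp$; you would still need to check that the interpolated extremal instance is consistent with an actual run of~\eqref{eq:orip} and a legitimate inexact oracle. The paper closes this by an explicit construction: $f(x)=c\,x+i_{\R_+}(x)$ in dimension one with $c=\tfrac{(1+\sigma)x_0}{2A_N}$, oracle outputs $g_k=c$, $u_k=x_k$, $e_k=-\tfrac{c\sigma}{1+\sigma}$ saturating the relative error criterion, and an induction showing $x_k=x_0-\tfrac{A_k}{1+\sigma}c$, which yields $h(x_N)-h_\star=\tfrac{(1+\sigma)\normsq{x_0-x_\star}}{4A_N}$ exactly. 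Your proposal gestures at ``a low-dimensional piecewise-affine function'' but never produces it nor verifies that the algorithm's iterates on it attain the bound; as it stands, the tightness claim is unproven.
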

\begin{proof}
For the sake of clarity, we present the proof using notations of \eqref{eq:orip}, although the proof was found via the SDP formulation \eqref{eq:sdp-orip}.

We start with the case $\sigma \in (0,1]$. The case $\sigma=0$ is considered afterward as it requires a slightly different treatment.

In the following we denote by $u_{k+1}$ a point satisfying $u_{k+1}\in \partial h^*(g_{k+1})$ or equivalently $g_{k+1}\in \partial h(u_{k+1})$. These $u_{k+1}$ can be identified with the $w_{2k+1}$ in \eqref{eq:generic-prox-optim}.

Consider the following inequalities with their corresponding weights :
\begin{itemize}
\item convexity between $x_k$ and $u_{k+1}$ with weight $\nu_{k,k+1}=\tfrac{A_k}{1+\sigma}$\\
(for $k=1,\hdots,N-1$)
\[h(x_k)\geq h(u_{k+1})+\inner{g_{k+1}}{x_k-u_{k+1}}, \]
\item convexity between $x_\star$ and $u_k$ with weight $\nu_{\star,k}=\tfrac{A_k-A_{k-1}}{1+\sigma}$ \\(for $k=1,\hdots,N$)
\[h(x_\star)\geq h(u_{k})+\inner{g_k}{x_\star-u_{k}}, \]
\item convexity between $x_k$ and $u_k$ with weight 
$\nu_{k,k}=\tfrac{A_k(1-\sigma)}{\sigma(1+\sigma)}$ \\(for $k=1,\hdots,N$)
\[h(x_k)\geq h(u_{k})+\inner{g_k}{x_k-u_{k}}, \]
\item approximation requirement on $x_k$ with weight $\nu_k=\tfrac{A_k}{\sigma(1+\sigma)}$ \\(for $k=1,\hdots,N$)
\[ \tfrac{ \sigma^2}{2\lambda_k}\normsq{x_k-y_{k-1}} \geq \tfrac{1}{2\lambda_k}\normsq{x_k-y_{k-1}+\lambda_kg_k}+ h(x_k)-h(u_k)-\inner{g_k}{x_k-u_k} .\]
\end{itemize}
By linearly combining the previous inequalities, with their corresponding weights (which are nonnegative), we arrive to the following valid inequality:
		\begin{equation*}
		\begin{aligned}
		\sum_{k=1}^{N-1} &\nu_{k,k+1} h(x_k)+\sum_{k=1}^{N} \nu_{\star,k} h(x_\star)+\sum_{k=1}^N \nu_{k,k}h(x_k)+\sum_{k=1}^N \nu_k \tfrac{\sigma^2}{2\lambda_k}\norm{x_k-y_{k-1}}\\
		\geq&\sum_{k=1}^{N-1}\nu_{k,k+1}[h(u_{k+1}) +\inner{g_{k+1}}{x_k-u_{k+1}}]\\&+\sum_{k=1}^N\nu_{\star,k}[h(u_k)+\inner{g_k}{x_\star-u_{k}}]+\sum_{k=1}^{N}\nu_{k,k}[h(u_{k}) +\inner{g_{k}}{x_k-u_{k}}]\\
		&+\sum_{k=1}^N\nu_k [\tfrac{1}{2\lambda_k}\normsq{x_k-y_{k-1}+\lambda_kg_k}+ h(x_k)-h(u_k)-\inner{g_k}{x_k-u_k}].
		\end{aligned}
		\end{equation*}
		Substituting $x_{k}$ by its expression in~\eqref{eq:eq_orip}, a reasonable amount of work (see Appendix~\ref{sec:missingparts_orip}) allows reformulating this inequality exactly as
		\begin{equation*}
		\begin{aligned}
		\tfrac{A_N}{1+\sigma} (h(x_N)-h_\star) \leq& \tfrac{1}{4}\normsq{x_0-x_\star} - \tfrac14\normsq{x_\star-x_0+\tfrac{2}{1+\sigma}\sum_{i=1}^N(A_i-A_{i-1}) g_i} \\
		&- \tfrac{1-\sigma}{2\sigma}\sum_{i=1}^NA_i\lambda_i\normsq{\tfrac{y_{i-1}-\lambda_ig_i - x_i }{\lambda_i}+ \tfrac{\sigma}{1+\sigma}g_i}.
		\end{aligned}
		\end{equation*}
		Since the last two terms in the right hand side are nonpositive, we deduce that
		\[ \tfrac{A_N}{1+\sigma} (h(x_N)-h_\star)  \leq \tfrac14\normsq{x_0-x_\star}.\]
		
		For the case $\sigma = 0$ \cite[Theorem 6.1]{guler1992new} provides a proof when using constant step sizes. Here, we follow the same pattern as before for allowing variable step sizes. We consider the following inequalities
		\begin{itemize}
        \item convexity between $x_k$ and $x_{k+1}$ with weight $\nu_{k,k+1}=A_k$\\
        (for $k=0,\hdots,N-1$)
        \[h(x_k)\geq h(x_{k+1})+\inner{g_{k+1}}{x_k-x_{k+1}}, \]
        \item convexity between $x_\star$ and $x_k$ with weight $\nu_{\star,k}=A_k-A_{k-1}$ \\(for $k=1,\hdots,N$)
        \[h(x_\star)\geq h(x_{k})+\inner{g_k}{x_\star-x_{k}}. \]
        \end{itemize}
		
		As previously linearly combining the previous inequalities leads to
		\begin{equation*}
		\begin{aligned}
		\sum_{k=1}^{N-1} \nu_{k,k+1} h(x_k)+\sum_{k=1}^{N} \nu_{\star,k} h(x_\star)
		\geq&\sum_{k=1}^{N-1}\nu_{k,k+1}[h(x_{k+1}) +\inner{g_{k+1}}{x_k-x_{k+1}}]\\
		&+\sum_{k=1}^N\nu_{\star,k}[h(x_k)+\inner{g_k}{x_\star-x_{k}}],
		\end{aligned}
		\end{equation*}
		which can be reformulated exactly as
		\begin{equation*}
		\begin{aligned}
		A_N(h(x_N)-h_\star) \leq& \tfrac{1}{4}\normsq{x_0-x_\star} - \tfrac14\normsq{x_\star-x_0+2\sum_{i=1}^N(A_i-A_{i-1}) g_i} \\
		\leq& \tfrac14\normsq{x_0-x_\star}.
		\end{aligned}
		\end{equation*}
		
		The tightness part of the proof is deferred to Appendix~\ref{sec:tight_orip}, where we show that the bound is satisfied with equality on one-dimensional linear minimization problems.
	\end{proof}

 A classical lower bound on the value of the sequence $\{A_k\}_k$ shows that the previous bound is a $O(N^{-2})$ when the $\lambda_k$ are lower bounded by some positive constant.
\begin{lemma}[Lemma 3.7 of~\cite{monteiro2013accelerated}]
Given a sequence $\{\lambda_k\}_k$ with $\lambda_k\geq 0$. Let $A_0 = 0$ and $A_{k+1} = A_k + \tfrac{\lambda_{k+1}+\sqrt{4\lambda_{k+1}A_{k}+\lambda_{k+1}^2}}{2}$ defined recursively, then 
\[A_k \geq \frac14\left(\sum_{i=1}^k\sqrt{\lambda_k}\right)^2 \quad \text{for } k\geq 1.\]
\end{lemma}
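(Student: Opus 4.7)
The plan is to first rewrite the recurrence in a cleaner implicit form, then estimate the increments of $\sqrt{A_k}$ and telescope. The key observation is that the expression $A_{k+1}-A_k = \tfrac{\lambda_{k+1}+\sqrt{4\lambda_{k+1}A_k+\lambda_{k+1}^2}}{2}$ is exactly the positive root of a simple quadratic in $t = A_{k+1}-A_k$.

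First, I would verify the identity $(A_{k+1}-A_k)^2 = \lambda_{k+1}\,A_{k+1}$. Setting $t := A_{k+1}-A_k$, the recurrence gives $2t-\lambda_{k+1} = \sqrt{4\lambda_{k+1}A_k+\lambda_{k+1}^2}$; squaring both sides and simplifying yields $t^2 = \lambda_{k+1}(A_k+t) = \lambda_{k+1}A_{k+1}$. This reformulation is the whole heart of the argument; everything else is routine.

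Next, I would bound the square-root increment. Since $\lambda_{k+1}\geq 0$ implies $A_{k+1}\geq A_k\geq 0$, one has
\[
\sqrt{A_{k+1}}-\sqrt{A_k}
= \frac{A_{k+1}-A_k}{\sqrt{A_{k+1}}+\sqrt{A_k}}
\geq \frac{A_{k+1}-A_k}{2\sqrt{A_{k+1}}}
= \frac{\sqrt{\lambda_{k+1}A_{k+1}}}{2\sqrt{A_{k+1}}}
= \frac{\sqrt{\lambda_{k+1}}}{2},
\]
where in the last equality I use the identity from the previous step (and the convention $0/0=0$ if $A_{k+1}=0$, which forces $\lambda_{k+1}=0$ anyway).

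Finally, I would telescope from $i=1$ to $k$, using $A_0=0$, to obtain $\sqrt{A_k}\geq \tfrac12\sum_{i=1}^{k}\sqrt{\lambda_i}$, and then square both sides to conclude $A_k\geq \tfrac14\bigl(\sum_{i=1}^{k}\sqrt{\lambda_i}\bigr)^2$. There is no real obstacle here once the implicit identity $(A_{k+1}-A_k)^2=\lambda_{k+1}A_{k+1}$ is noticed; without this simplification, directly unrolling the explicit recurrence would be considerably messier.
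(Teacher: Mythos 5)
Your proof is correct. The paper itself does not prove this lemma --- it is quoted directly from Monteiro--Svaiter (their Lemma~3.7) --- but your argument is essentially the standard one from that reference: the implicit identity $(A_{k+1}-A_k)^2=\lambda_{k+1}A_{k+1}$ (which the paper also records and uses as equation~\eqref{eq:Ak} in Appendix~\ref{sec:missingparts_orip}) gives $\sqrt{A_{k+1}}-\sqrt{A_k}\geq\tfrac12\sqrt{\lambda_{k+1}}$, and telescoping finishes the proof; your handling of the degenerate case $A_{k+1}=0$ is also fine.
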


\begin{remark}
We emphasize that there is no constraint on the relation between primal and dual points outputted by the process hidden behind ``Obtain". In particular, primal-dual pairs of the form $(x_k,\partial h(x_k))$ or $(x_k,\tfrac{y_{k-1}-x_k}{\lambda_k})$ can be used.
\end{remark}


\section{Dealing with strongly convex objectives}\label{sec:str-pep}
In this section we present how the methodology detailed in Section~\ref{sec:pep} can be extended to support strongly convex functions. We illustrate it on the simple relatively inexact proximal method studied in Section~\ref{subsec:ppa} applied to strongly convex objectives.

For adjusting the performance estimation approach to strongly convex problems, we only need minor modifications. According to \cite[Corollary 2]{taylor2017smooth}, for $\mu >0$ and a set $S\in\Rd\times\Rd\times\R$
\begin{equation}\label{eq:interp-strcvx}
\begin{aligned}
\exists h\in\Fmu: \, &f=h(x),\quad g\in\partial h(x) \quad \forall (x,g,f)\in S \\ &\Leftrightarrow f'\geq f+\inner{g}{x'-x} + \tfrac{\mu}{2}\normsq{x-x'} \quad \forall (x,g,f),(x',g',f')\in S.
\end{aligned}
\end{equation}
In order to analyze inexact proximal minimization methods on strongly convex functions, we can simply follow Section~\ref{sec:pep} replacing the use of \eqref{eq:interp} by that of \eqref{eq:interp-strcvx}.

Let us illustrate that statement by instantiating the \emph{inexact proximal minimization algorithm} for strongly convex objectives. We recall the form of the updates
\begin{equation}\label{eq:example-str}
    \begin{aligned}
    w_{k+1} &= w_k - \lambda_{k+1}(v_{k+1}+e_{k+1})\\
    \normsq{e_{k+1}} &\leq \tfrac{\sigma^2}{\lambda_{k+1}^2}\normsq{w_{k+1}-w_k},
    \end{aligned}
\end{equation}
with $\{\lambda_k\}_k$ a sequence of nonnegative step sizes, $v_{k+1}\in \partial h (w_{k+1})$, $\{e_{k}\}_k$ a sequence of errors and $\sigma \in[0,1]$.

For $\mu > 0 $ we can study the following performance estimation problem for $\mu$-strongly convex objective functions $h$. In order to derive simpler worst-case guarantees, we use as slightly different initial condition compared with the previous section, which is $h(w_0)-h(w_\star) \leq R^2$ with $R\in \R^*$.

\begin{equation}\label{eq:pep-str}
\begin{aligned}
\max_{\substack{d,h\\w_\star,w_0,\hdots,w_{N}\in\Rd\\g_0,\hdots,g_{N}\in\Rd\\e_0,\hdots,e_{N}\in\Rd}}& h(w_{N})-h(w_\star)\\
\text{s.t. }&h\in\Fmu(\Rd), \quad w_\star\in\argmin_x h(x)\\
&h(w_0)-h(w_\star) \leq R^2\\
&w_1,\hdots,w_{N} \text{ satisfying } \eqref{eq:example-str}\\
&\normsq{e_k}\leq \tfrac{\sigma^2}{\lambda_k^2}\normsq{w_{k}-w_{k-1}} \quad k=1,\hdots,{N}.
\end{aligned}
\end{equation}

Following similar developments as those of Section \ref{sec:pep} and using interpolation conditions \eqref{eq:interp-strcvx} we get the semidefinite reformulation 
\begin{equation}\label{eq:sdp-ppa-str}
\begin{aligned}
\max_{G\succeq 0,\, H}\,\, &H({\bhh_N-\bhh_\star})&\\
\text{s.t. }&\;0\; \geq H(\bhh_i-\bhh_j)+\bvv_i^T G(\bww_j-\bww_i)\\
&\;\;\;\;\;+ \tfrac{\mu}{2}(\bww_j-\bww_i)^TG(\bww_j-\bww_i) \quad \forall i,j\in\{\star,0,\hdots,N\}\\
&R^2 \hspace{-0.04cm}\geq H(\bhh_0-\bhh_\star)\\
&\;0\; \geq \bee_i^TG\bee_i - \tfrac{\sigma^2}{\lambda_i^2}(\bww_i - \bww_{i-1})^TG(\bww_{i}-\bww_{i-1}) \quad \forall i\in\{1,\hdots,N\}.
\end{aligned}
\end{equation}
As before, we exhibit a dual feasible point, and the proof relies on weak duality.
\begin{theorem}\label{thm:ppa-str}
Let $\mu \geq 0$, $h\in\Fmu$, a sequence of step sizes $\{\lambda_k\}_k$ with $\lambda_k > 0$, and $\sigma\in[0,1]$. For any starting point $w_0\in\Rd$, $N\geq 1$, the iterates of \eqref{eq:example-str} satisfy
\[h(w_N)-h(w_\star)\leq \prod_{i=1}^N\left(\tfrac{1+\sigma}{1+\sigma+\lambda_k\mu}\right)^2(h(w_0)-h(w_\star)) , \]
with $w_\star \in \argmin_x h(x)$. Furthermore, this bound is tight: for all $\mu \geq 0$, $\{\lambda_k\}_k$ with $\lambda_k\geq0$, $\sigma\in[0,1]$, $d\in\mathbb{N}$, $w_0\in\mathbb{R}^d$, and $N\in\mathbb{N}$, there exists $h\in\Fmu(\Rd)$ such that this bound is achieved with equality.
\end{theorem}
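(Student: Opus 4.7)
My plan is to prove the per-iteration contraction
\[ h(w_{k+1})-h_\star \le r_{k+1}\bigl(h(w_k)-h_\star\bigr),\qquad r_k := \Bigl(\tfrac{1+\sigma}{1+\sigma+\lambda_k\mu}\Bigr)^2, \]
and then telescope from $k=0$ to $N-1$ to recover the stated product. The one-step bound would follow the same weak-duality template as in the proof of Theorem~\ref{thm:orip2}: combine a few inequalities known to hold with nonnegative weights, arranged so that all function values collapse exactly to $h(w_{k+1})-h_\star-r_{k+1}(h(w_k)-h_\star)$, leaving a residual quadratic form in the iterates that I only need to show is nonpositive.

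Concretely, three ingredients should suffice: the strong convexity interpolation inequality~\eqref{eq:interp-strcvx} between $w_k$ and $w_{k+1}$ with weight $r_{k+1}$; the interpolation between $w_\star$ and $w_{k+1}$ (using $v_\star=0$) with weight $1-r_{k+1}$; and the inexactness constraint $\|e_{k+1}\|^2\le\sigma^2\|w_{k+1}-w_k\|^2/\lambda_{k+1}^2$ with some weight $d^\star\ge 0$. The first two weights are forced by matching the coefficients of $h(w_k)$, $h(w_{k+1})$ and $h_\star$. After substituting $v_{k+1}=(w_k-w_{k+1})/\lambda_{k+1}-e_{k+1}$ from the update, the residue becomes a quadratic form on the triple $(w_k-w_{k+1},\,w_\star-w_{k+1},\,e_{k+1})$, whose $3\times 3$ symmetric representing matrix I have to show is negative semidefinite.

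The main obstacle will be pinning down the analytical value of $d^\star$. Following the methodology of Section~\ref{ss:dual}, I would solve the SDP dual of~\eqref{eq:sdp-ppa-str} numerically for a few representative triples $(\sigma,\lambda,\mu)$ and fit an analytical form. The natural candidate is
\[ d^\star \,=\, \frac{(1+\sigma)\,\lambda_{k+1}}{2\sigma\,(1+\sigma+\lambda_{k+1}\mu)}\qquad (\sigma>0),\]
and $d^\star=0$ when $\sigma=0$ (in which case $e_{k+1}$ is forced to zero and the check collapses to a clean $2\times 2$ PSD determinant giving exactly $1/(1+\lambda_{k+1}\mu)^2$). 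Verifying this choice should reduce to observing that the $3\times 3$ matrix has zero determinant precisely at $r=r_{k+1}$, so $r_{k+1}$ is the smallest admissible rate and the matrix becomes rank-deficient and NSD. For the tightness claim I would then test the one-dimensional instance $h(x)=\tfrac{\mu}{2}(x-x_\star)^2$: choosing $e_k$ anti-aligned to $w_{k-1}$ and saturating $\|e_k\|=\sigma\|w_k-w_{k-1}\|/\lambda_k$ yields $|w_k|/|w_{k-1}|=(1+\sigma)/(1+\sigma+\lambda_k\mu)$ at every step, which squares to exactly $r_k$ on function values and reproduces the full product.
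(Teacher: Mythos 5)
Your proposal is correct and follows essentially the same route as the paper: the per-iteration contraction with rate $\rho_k^2=\bigl(\tfrac{1+\sigma}{1+\sigma+\lambda_k\mu}\bigr)^2$, the same three inequalities with the same weights (your candidate $d^\star$ equals the paper's multiplier $\nu_k=\tfrac{\lambda_k\rho_k}{2\sigma}$ exactly), the same degenerate treatment of $\sigma=0$, and the same quadratic tightness instance. The only difference is cosmetic: you certify nonpositivity of the residual via a $3\times 3$ negative-semidefiniteness check, whereas the paper writes it out as an explicit completion of squares.
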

\begin{proof}
Let us denote by $\rho_k = \tfrac{1+\sigma}{1+\sigma+\lambda_k \mu}\in[0,1]$ for $k=1,\hdots,N$.
We show the result by proving that 
\[h(w_{k}) - h(w_\star) \leq \rho_k^2(h(w_{k-1})-h(w_\star)) \quad k=1\hdots,N.\]
Indeed, chaining these inequalities for $k \in[1,N]$ leads to the desired conclusion.

We first detail the case $\sigma \in (0,1]$. Let $k\in [1,N]$, and consider the following inequalities with their corresponding weights :
\begin{itemize}
\item strong convexity between $w_{k-1}$ and $w_{k}$ with weight $\nu_{k-1,k}=\rho_k^{2}$
\[h(w_{k-1})\geq h(w_{k})+\inner{v_{k}}{w_{k-1}-w_{k}}+ \tfrac{\mu}{2}\normsq{w_{k-1}-w_k}, \]
\item strong convexity between $w_\star$ and $w_k$ with weight $\nu_{\star,k}=1-\rho_k^{2}$ 
\[h(w_\star)\geq h(w_{k})+\inner{v_k}{w_\star-w_{k}} + \tfrac{\mu}{2}\normsq{w_k-w_\star}, \]
\item approximation requirement on $w_k$ with weight $\nu_k=\tfrac{\lambda_k\rho_k}{2\sigma}$
\[\tfrac{\sigma^2}{\lambda_k^2}\normsq{w_{k-1}-w_k}\geq \normsq{e_k}.\]
\end{itemize}
By linearly combining previous inequalities, with their corresponding weights (which are nonnegative), we arrive to the following valid inequality:
\begin{equation}\label{eq:proof-str-ppa}
\begin{aligned}
    &\nu_{k-1,k}h(w_{k-1}) + \nu_{\star,k}h(w_\star) + \nu_k \tfrac{\sigma^2}{\lambda_k^2}\normsq{w_{k-1}-w_k} \\
    &\geq  \nu_{k-1,k}[h(w_k)+\inner{v_k}{w_{k-1}-w_k} + \tfrac{\mu}{2}\normsq{w_{k-1}-w_k}] \\
    &\,\,\,\,\,+ \nu_{\star,k}[h(w_k)+\inner{v_k}{w_\star-w_k}+\tfrac{\mu}{2}\normsq{w_\star-w_k}] + \nu_k\normsq{e_k}.
\end{aligned}    
\end{equation}

First we can regroup the function values together and observe that 
\begin{align*}
     &\nu_{k-1,k}h(w_k)+\nu_{\star,k}h(w_k)-\nu_{k-1,k}h(w_{k-1}) - \nu_{\star,k}h(w_\star)\\
    &= h(w_k)-h(w_\star) - \rho_k^2(h(w_{k-1})-h(w_\star)).
\end{align*}

Then, we regroup the vector variables together and use $w_{k} = w_{k-1}-\lambda_k(v_k+e_k)$ in 
\begin{align*}
    &\nu_{k-1,k}[\inner{v_k}{w_{k-1}-w_k} + \tfrac{\mu}{2}\normsq{w_{k-1}-w_k}]+ \nu_k[\normsq{e_k}-\tfrac{\sigma^2}{\lambda_k^2}\normsq{w_{k-1}-w_k}]  \\
    &\,\,\,\,\,+ \nu_{\star,k}[\inner{v_k}{w_\star-w_k}+\tfrac{\mu}{2}\normsq{w_\star-w_k}]\\
    &=\nu_{k-1,k}[\lambda_k\inner{v_k}{v_k+e_k} + \tfrac{\mu\lambda_k^2}{2}\normsq{v_k+e_k}]  + \nu_k[\normsq{e_k}-\sigma^2\normsq{v_k+e_k}]\\
    &\,\,\,\,\,+ \nu_{\star,k}[\inner{v_k}{w_\star-w_{k-1}+\lambda_k(v_k+e_k)}+\tfrac{\mu}{2}\normsq{w_\star-w_{k-1}+\lambda_k(v_k+e_k)}]\\
    &=\nu_{k-1,k}[\lambda_k\inner{v_k}{v_k+e_k} + \tfrac{\mu\lambda_k^2}{2}\normsq{v_k+e_k}]  + \nu_k[\normsq{e_k}-\sigma^2\normsq{v_k+e_k}]\\
    &\,\,\,\,\,+ \nu_{\star,k}[\tfrac{\mu}{2}\normsq{w_\star-w_{k-1}+\lambda_k(v_k+e_k)+\tfrac{1}{\mu}v_k} - \tfrac{1}{2\mu}\normsq{v_k}].
\end{align*}
We can then factorize the following expression 
\begin{align*}
    &\nu_{k-1,k}[\lambda_k\inner{v_k}{v_k+e_k} + \tfrac{\mu\lambda_k^2}{2}\normsq{v_k+e_k}]  + \nu_k[\normsq{e_k}-\sigma^2\normsq{v_k+e_k}]- \tfrac{\nu_{\star,k}}{2\mu}\normsq{v_k}\\
    &=[\nu_{k-1,k}\tfrac{\mu\lambda_k^2}{2}+\nu_k(1-\sigma^2)]\normsq{e_k}
    + [\nu_{k-1,k}\lambda_k+\nu_{k-1,k}\mu\lambda_k^2-2\nu_k\sigma^2]\inner{e_k}{v_k}\\
    &\,\,\,\,\,+[\nu_{k-1,k}\lambda_k + \nu_{k-1,k}\tfrac{\mu\lambda_k^2}{2}-\nu_k\sigma^2-\tfrac{\nu_{\star,k}}{2\mu}]\normsq{v_k}\\
    &=\tfrac{\lambda_k\rho_k}{2\sigma}[\lambda_k\mu\sigma\rho_k + (1-\sigma^2)]\normsq{e_k}
    + \lambda_k\rho_k[\rho_k(1+\lambda_k\mu) - \sigma]\inner{e_k}{v_k}+[\tfrac{(\rho_k+\lambda_k\mu\rho_k)^2 -(1+\lambda_k\mu\sigma \rho_k)}{2\mu}]\normsq{v_k}\\
    & = \tfrac{\lambda_k(1+\sigma)^2(1-\sigma^2+\lambda_k\mu)}{2\sigma(1+\sigma +\lambda_k\mu)^2}\normsq{e_k}
    + \tfrac{\lambda_k(1+\sigma)(1-\sigma^2+\lambda_k\mu)}{(1+\sigma + \lambda_k\mu)^2}\inner{e_k}{v_k}+\tfrac{\lambda_k\sigma(1-\sigma^2+\lambda_k\mu)}{2(1+\sigma+\lambda_k\mu)^2}\normsq{v_k}\\
    & = \tfrac{\lambda_k(1+\sigma)^2(1-\sigma^2+\lambda_k\mu)}{2\sigma(1+\sigma +\lambda_k\mu)^2}\normsq{e_k+\tfrac{\sigma}{1+\sigma}v_k},
\end{align*}
where we replaced $\rho_k$ by its expression in the second to last line.
Finally \eqref{eq:proof-str-ppa} can be written as 
\begin{align*}
    0 \geq&\, h(w_k)-h(w_\star) - \rho_k^2(h(w_{k-1})-h(w_\star)) \\
    &+ \tfrac{(1-\rho_k^2)\mu}{2}\normsq{w_\star-w_{k-1}+\lambda_k(v_k+e_k)+\tfrac{1}{\mu}v_k}\\
    &+ \tfrac{\lambda_k(1+\sigma)^2(1-\sigma^2+\lambda_k\mu)}{2\sigma(1+\sigma +\lambda_k\mu)^2}\normsq{e_k+\tfrac{\sigma}{1+\sigma}v_k}.
\end{align*}
Since $\rho_k,\sigma \in [0,1] $ the leading factors in front of the squared Euclidean norms are nonnegative and this leads to 
\[  h(w_k)-h(w_\star) \leq \rho_k^2(h(w_{k-1})-h(w_\star)) \]
which concludes the first part of the proof for $\sigma\in(0,1]$, according to our initial remark.

For the exact case (i.e., $\sigma =0$), the proof carries on likewise, by only combining the first two inequalities, encoding strong convexity, leading to
\begin{align*}
    0 \geq&\, h(w_k)-h(w_\star) - \rho_k^2(h(w_{k-1})-h(w_\star)) + \tfrac{(1-\rho_k^2)\mu}{2}\normsq{w_\star-w_{k-1}+\left(\lambda_k+\tfrac{1}{\mu}\right)v_k},
\end{align*}
and the desired conclusion follows.
The tightness part is deferred to Appendix~\ref{app:tight-str} where we show that the bound is satisfied with equality on a simple quadratic function.
\end{proof}

\section{Conclusion}\label{sec:ccl} In this work, we showed that the performance estimation framework, initiated by Drori and Teboulle~\cite{drori2014performance}, allows studying first-order methods involving natural notions of inexact proximal operations. On the way, we reviewed natural accuracy requirements used in the literature for characterizing inexact proximal operations. We also used the approach for optimizing the parameters of an inexact proximal point algorithm. Finally, we presented a simple extension to the strongly convex setting. 

As future works, we believe the approach can be extended to inexact Bregman proximal steps (see e.g.,~\cite{eckstein1998approximate}), and to inexact resolvent for monotone operators (see e.g.,~\cite{solodov1999hybrid}), for example by following steps taken~\cite{dragomir2021optimal,ryu2018operator}. Further using those tools for designing optimized methods involving inexact proximal operations for monotone inclusions, and variational inequalities are also possibilities. Let us also mention that it is currently unclear to us whether similar techniques can be used for studying higher-order proximal methods, as recently introduced by Nesterov~\cite{nesterov2020inexactAcc,nesterov2020inexact}.

Finally, an alternate, and more geometric, approach for studying inexact proximal operations could be to extend \emph{scaled relative graphs}~\cite{ryu2019scaled} to deal with inaccuracies.

\paragraph{Codes} Codes, that include notebooks for helping the reader reproducing the proofs and implementation of the performance estimation problems, are available at
\begin{center}
\url{https://github.com/mathbarre/InexactProximalOperators/tree/version-2}
\end{center}
Notions of inexactness were also included in the performance estimation toolbox~\cite{taylor2017performance}.
\paragraph{Acknowledgements} The authors would like to thank Ernest Ryu for insightful feedbacks on a preliminary version of this manuscript. MB acknowledges support from an AMX fellowship. The authors acknowledge support from the European Research Council (grant SEQUOIA 724063). This work was funded in part by the French government under management of Agence Nationale
de la Recherche as part of the ``Investissements d'avenir" program, reference ANR-19-P3IA-0001
(PRAIRIE 3IA Institute).
\bibliographystyle{spmpsci}      
\bibliography{bib}
\appendix
\normalsize
\section{More examples of fixed-step inexact proximal methods}\label{app:methods}
This extends the list of examples of Section~\ref{sss:examples_algo}. 
\begin{itemize}
    \item The \emph{hybrid approximate extragradient algorithm} (see \cite{solodov1999hybrid} or \cite[Section 4]{monteiro2010complexity}) can be described as
    \[ x_{k+1}=x_k-\eta_{k+1}g_{k+1},\]
    such that $\exists u_{k+1}, \PDg_{\eta_{k+1} h}(u_{k+1},g_{k+1};\,x_{k}) \leq \tfrac{\sigma^2}{2}\normsq{u_{k+1}-x_{k}}$ (see Lemma~\ref{lem:pdgag-eps} for a link between $\varepsilon$-subgradient formulation and primal-dual gap).
    One iteration of this form can be artificially cast into three iterations of \eqref{eq:generic-prox} as 
    \[\left \{ \begin{array}{rcl}
         w_{3k+1} &=& w_{3k}-e_{3k}\\
         w_{3k+2} &=& w_{3k+1}-e_{3k+1}\\
         w_{3k+3} &=& w_{3k+2} +e_{3k}+e_{3k+1} - \eta_{k+1}v_{3k+2} \\
    \end{array}\right. \]
    with  $v_{3k+2} \in \partial h(w_{3k+2})$ . This corresponds to setting $\lambda_{3k+1} = \lambda_{3k+2} =\lambda_{3k+3} = 0$,  $\alpha_{3k+3,3k+2}=\eta_{k+1}$, $\beta_{3k+1,3k} = \beta_{3k+2,3k+1}=1$, $\beta_{3k+3,3k+1}=\beta_{3k+3,3k+2} = -1$ and the other parameters to zero. Notice that $w_{3k+3} = w_{3k} - \eta_{k+1}v_{3k+2}$. By requiring $\PDg_{\eta_{k+1} h}(w_{3k+1},v_{3k+2};\,w_{3k}) \leq \tfrac{\sigma^2}{2}\normsq{w_{3k+1}-w_{3k}}$ we can identify the primal-dual pair $(u_{k+1},g_{k+1})$ with $(w_{3k+1},v_{3k+2})$ and iterates $x_{k+1}$ with $w_{3k+3}$.
    In addition, we set 
    \begin{align*}
        \text{EQ}_{3k+1} & =0,\\
        \text{EQ}_{3k+2} & =0,\\
        \text{EQ}_{3k+3} &= \PDg_{\eta_{k+1} h}(w_{3k+1},v_{3k+2};\,w_{3k}) - \tfrac{\sigma^2}{2}\normsq{w_{3k+1}-w_{3k}}.
    \end{align*}
Using $v_{3k+2}\in \partial h (w_{3k+2})$, we have $h^*(v_{3k+2}) = \inner{v_{3k+2}}{w_{3k+2}} - h(w_{3k+2})$ and thus 
    \begin{align*}
        \text{EQ}_{3k+3} =& \tfrac12\normsq{w_{3k+1}-w_{3k+3}}+\eta_{k+1}(h(w_{3k+1})-h(w_{3k+2}) \\
        &- \inner{v_{3k+2}}{w_{3k+1}-w_{3k+2}})-\tfrac{\sigma^2}{2}\normsq{w_{3k+1}-w_{3k}},
    \end{align*} which complies with \eqref{eq:generic_inexact} and is Gram-representable.
\item The \emph{inexact accelerated proximal point algorithm} \emph{IAPPA1} in its form from \cite[Section 5]{salzo2012inexact} can be written as 
    \[\left \{ \begin{array}{rcl}
         t_{k+1}&=& \tfrac{1+\sqrt{1+4t_k^2\tfrac{\eta_{k+1}}{\eta_{k+2}}}}{2}  \\
         x_{k+1} &=& y_{k} - \eta_{k+1}(g_{k+1}+r_{k+1}) \\
         y_{k+1} &=& x_{k+1} + \tfrac{t_k-1}{t_{k+1}}(x_{k+1}-x_k)
    \end{array}\right. \]
    with $t_0 = 1$, $\{\eta_k\}_k$ a sequence of step sizes, $y_0=x_0\in \Rd$ along with an inexactness criterion of the form $\PDg_{\eta_{k+1} h}(x_{k+1},g_{k+1};y_k) \leq \varepsilon_{k+1}$ given a nonnegative sequence $\{\varepsilon_k\}_k$. Similarly to Güler's method we get the recursive formulation
    \[x_{k+2} = \left(1+\tfrac{t_k-1}{t_{k+1}}\right)x_{k+1} - \tfrac{t_k-1}{t_{k+1}}x_k - \eta_{k+2}(g_{k+2}+r_{k+2}). \]
    We consider particular iterations from \eqref{eq:generic-prox} of the form 
    \[\left \{ \begin{array}{rcl}
         w_{2k+1} &=& w_{2k} - e_{2k} \\
         w_{2k+2} &=& w_{2k+1} -\displaystyle\sum_{i=1}^{2k+1}\alpha_{2k+2,i}v_{i} -\sum_{i=0}^{2k+1}\beta_{2k+2,i}e_i,
    \end{array}\right. \]
    with initial iterate $w_0=x_0$. We aim at finding parameters $\alpha_{i,j}$, $\beta_{i,j}$ such that we can identify $\{w_{2k}\}_k$ with $\{x_k\}_k$ (i.e., any sequence $\{x_k\}_k$ can be obtained as a sequence $\{w_{2k}\}_k$).
    We set $\alpha_{2k+2,2k+1}=\beta_{2k+2,2k+1} = \eta_{k+1}$, $\alpha_{2k+2,i} = \tfrac{t_{k-1}-1}{t_k}\alpha_{2k,i}$ for $i=1,\hdots,2k-1$ and $\beta_{2k+2,i} = \tfrac{t_{k-1}-1}{t_k}\beta_{2k,i}$ for $i\in\{0,\hdots,2k-1\}\backslash\{2(k-1)\}$ as well as $\beta_{2k+2,2k} = -1$ and $\beta_{2k+2,2(k-1)} = \tfrac{t_{k-1}-1}{t_k}(1+\beta_{2k,2(k-1)})$.
    
    This gives 
    \begin{align*}
        w_{2(k+1)} =& w_{2k+1} + e_{2k} - \tfrac{t_{k-1}-1}{t_k}(e_{2(k-1)})  -\tfrac{t_{k-1}-1}{t_k}\displaystyle\sum_{i=1}^{2k-1}\alpha_{2k,i}v_{i} \\
        &-\tfrac{t_{k-1}-1}{t_k}\sum_{i=0}^{2k-1}\beta_{2k,i}e_i 
        - \eta_{k+1}(v_{2k+1}+e_{2k+1})\\
        =& (1+\tfrac{t_{k-1}-1}{t_k})w_{2k} -\tfrac{t_{k-1}-1}{t_k}w_{2(k-1)}  - \eta_{k+1}(v_{2k+1}+e_{2k+1}),
    \end{align*} 
    which shows that $\{w_{2k}\}_k$ follows the same recursive equation as $\{x_{k}\}_k$. In addition, we have $w_0 = x_0$ and $w_2 = x_0 - \eta_{1}(v_1+e_1)$ similar to $x_1 = x_0 -\eta_1(g_1+r_1)$.
    Requiring $\PDg_{\eta_{k+1} h}(w_{2k+2},v_{2k+1};\,w_{2k+2}+\eta_{k+1}(v_{2k+1}+e_{2k+1})) \leq \varepsilon_{k+1}$ (with the convention $w_{-1}=w_0$) allows to identify the primal-dual pair $(x_{k+1},g_{k+1})$ with $(w_{2k+2},v_{2k+1})$.
    
    Finally, we can set $\text{EQ}_{2k+2} = \PDg_{\eta_{k+1} h}(w_{2k+2},v_{2k+1};\,w_{2k+2}+\eta_{k+1}(v_{2k+1}+e_{2k+1})) - \varepsilon_{k+1}$ which is Gram-representable (similar to \emph{hybrid approximate extragradient algorithm}).
    
    Note that we can proceed similarly for \emph{IAPPA2} from \cite[Section 5]{salzo2012inexact} with sequence $\{a_k\}_k$ constant equal to $1$, by removing the sequence $\{r_k\}_k$ ``type 2" errors).
    
    \item The \emph{acceleration hybrid proximal extragradient algorithm} (A-HPE) from \cite[Section 3]{monteiro2013accelerated} can be written as 
    \[\left \{
    \begin{array}{rcl}
    a_{k+1} &=& \tfrac{\eta_{k+1}+\sqrt{\eta_{k+1}^2+4\eta_{k+1}A_k}}{2}\\
    A_{k+1} &=& A_k + a_{k+1}\\
    \tilde{x}_k &=& y_k + \tfrac{a_{k+1}}{A_{k+1}}(x_k-y_k)\\
    y_{k+1} &=& \tilde{x}_k - \eta_{k+1}(g_{k+1}+r_{k+1})\\
    x_{k+1} &=& x_k - a_{k+1}g_{k+1},
    \end{array}\right. \]
    with $A_0=0$, $\{\eta_k\}_k$ a sequence of step sizes, $y_0=x_0\in \Rd$ along with an inexactmess criterion of the form $\PDg_{\eta_{k+1} h}(y_{k+1},g_{k+1};\tilde{x}_k) \leq \tfrac{\sigma}{2}\normsq{y_{k+1}-\tilde{x}_k}$ given a parameter $\sigma \in [0,1]$. As in the previous examples, we search for a recursive equation followed by the sequence $\{y_k\}_k$. By performing multiple substitutions, we obtain
    \begin{align*}
        y_{k+2}=& \tilde{x}_{k+1} - \eta_{k+2}(g_{k+2}+r_{k+2})\\
        =& \tfrac{A_{k+1}}{A_{k+2}}y_{k+1} + \tfrac{a_{k+2}}{A_{k+2}}x_{k+1}- \eta_{k+2}(g_{k+2}+r_{k+2})\\
        =& \tfrac{A_{k+1}}{A_{k+2}}y_{k+1} + \tfrac{a_{k+2}}{A_{k+2}}\left(x_{k}- a_{k+1}g_{k+1}\right)- \eta_{k+2}(g_{k+2}+r_{k+2})\\
        =& \tfrac{A_{k+1}}{A_{k+2}}y_{k+1} + \tfrac{a_{k+2}}{A_{k+2}}\left(\tfrac{A_{k+1}}{a_{k+1}}\tilde{x}_k - \tfrac{A_k}{a_{k+1}}y_{k} - a_{k+1}g_{k+1}\right)- \eta_{k+2}(g_{k+2}+r_{k+2})\\
        =& \tfrac{A_{k+1}}{A_{k+2}}y_{k+1} + \tfrac{a_{k+2}}{A_{k+2}}\left(\tfrac{A_{k+1}}{a_{k+1}}\left(y_{k+1}+\eta_{k+1}(g_{k+1}+r_{k+1}) \right)- \tfrac{A_k}{a_{k+1}}y_{k} - a_{k+1}g_{k+1}\right)\\
        &- \eta_{k+2}(g_{k+2}+r_{k+2})\\
        =& \left(\tfrac{A_{k+1}}{A_{k+2}}+\tfrac{a_{k+2}A_{k+1}}{A_{k+2}a_{k+1}}\right)y_{k+1}-\tfrac{a_{k+2}A_k}{A_{k+2}a_{k+1}}y_k + \tfrac{a_{k+2}}{A_{k+2}}\left(\tfrac{A_{k+1}}{a_{k+1}}\eta_{k+1} - a_{k+1}\right)g_{k+1}\\
        &+\tfrac{a_{k+2}A_{k+1}}{A_{k+2}a_{k+1}}\eta_{k+1}r_{k+1}- \eta_{k+2}(g_{k+2}+r_{k+2})\\
        =& \left(1+\tfrac{a_{k+2}A_{k}}{A_{k+2}a_{k+1}}\right)y_{k+1}-\tfrac{a_{k+2}A_k}{A_{k+2}a_{k+1}}y_k + \tfrac{a_{k+2}}{A_{k+2}}\left(\tfrac{A_{k+1}}{a_{k+1}}\eta_{k+1} - a_{k+1}\right)g_{k+1}\\
        &+\tfrac{a_{k+2}A_{k+1}}{A_{k+2}a_{k+1}}\eta_{k+1}r_{k+1}- \eta_{k+2}(g_{k+2}+r_{k+2}).
    \end{align*}
    Similar to \emph{IAPPA1}, we consider particular iterations from \eqref{eq:generic-prox} of the form 
    \[\left \{ \begin{array}{rcl}
         w_{2k+1} &=& w_{2k} - e_{2k} \\
         w_{2k+2} &=& w_{2k+1} -\displaystyle\sum_{i=1}^{2k+1}\alpha_{2k+2,i}v_{i} -\sum_{i=0}^{2k+1}\beta_{2k+2,i}e_i,
    \end{array}\right. \]
    with initial iterate $w_0=x_0$. We aim at finding parameters $\alpha_{i,j}$, $\beta_{i,j}$ such that we can identify $\{w_{2k}\}_k$ with $\{y_k\}_k$ (i.e., any sequence $\{y_k\}_k$ can be obtained as a sequence $\{w_{2k}\}_k$). We set $\alpha_{2(k+1),2k+1}=\beta_{2(k+1),2k+1} = \eta_{k+1}$, $\alpha_{2(k+1),i} = \tfrac{a_{k+1}A_{k-1}}{A_{k+1}a_{k}}\alpha_{2k,i}$ for $i\in\{1,\hdots,2(k-1)\}$ and $\beta_{2(k+1),i} = \tfrac{a_{k+1}A_{k-1}}{A_{k+1}a_{k}}\beta_{2k,i}$ for $i\in\{0,\hdots,2k-3\}$ as well as $\beta_{2(k+1),2k} = -1$, $\beta_{2(k+1),2k-1} = \tfrac{a_{k+1}}{A_{k+1}a_{k}}(A_{k-1}\beta_{2k,2k-1} -A_k\eta_{k} )$, $\beta_{2(k+1),2(k-1)} = \tfrac{a_{k+1}A_{k-1}}{A_{k+1}a_{k}}(1+\beta_{2k,2(k-1)})$ and $\alpha_{2(k+1),2k-1} = \tfrac{a_{k+1}}{A_{k+1}}\left(\tfrac{A_{k-1}}{a_k}\alpha_{2k,2k-1}-\tfrac{A_{k}}{a_{k}}\eta_{k} + a_{k}\right) $.
    
    This gives 
    \begin{align*}
        w_{2(k+1)} = & w_{2k+1} +e_{2k} +\tfrac{a_{k+1}A_k}{A_{k+1}a_{k}}\eta_{k}e_{2k-1} - \tfrac{a_{k+1}A_{k-1}}{A_{k+1}a_k}e_{2(k-1)}+ \tfrac{a_{k+1}}{A_{k+1}}\left(\tfrac{A_{k}}{a_{k}}\eta_{k} - a_{k}\right)v_{2k-1}\\ &-\tfrac{a_{k+1}A_{k-1}}{A_{k+1}a_{k}}\sum_{i=1}^{2k-1}\alpha_{2k,i}v_i - \tfrac{a_{k+1}A_{k-1}}{A_{k+1}a_{k}}\sum_{i=0}^{2k-1}\beta_{2k,i}e_i - \eta_{n+1}(v_{2k+1}+e_{2k+1})\\
        = & w_{2k} +\tfrac{a_{k+1}A_k}{A_{k+1}a_{k}}\eta_{k}e_{2k-1} - \tfrac{a_{k+1}A_{k-1}}{A_{k+1}a_k}e_{2(k-1)}+ \tfrac{a_{k+1}}{A_{k+1}}\left(\tfrac{A_{k}}{a_{k}}\eta_{k} - a_{k}\right)v_{2k-1}\\ &+\tfrac{a_{k+1}A_{k-1}}{A_{k+1}a_{k}}(w_{2k}-w_{2(k-1)}+e_{2(k-1)}) - \eta_{n+1}(v_{2k+1}+e_{2k+1})\\
        = & \left(1+\tfrac{a_{k+1}A_{k-1}}{A_{k+1}a_{k}}\right)w_{2k}-\tfrac{a_{k+1}A_{k-1}}{A_{k+1}a_{k}}w_{2(k-1)}+\tfrac{a_{k+1}}{A_{k+1}}\left(\tfrac{A_{k}}{a_{k}}\eta_{k} - a_{k}\right)v_{2k-1}\\
        &+\tfrac{a_{k+1}A_k}{A_{k+1}a_{k}}\eta_{k}e_{2k-1} - \eta_{n+1}(v_{2k+1}+e_{2k+1}),
    \end{align*}
    which shows that $\{w_{2k}\}_k$ follows the same recursive equation as $\{y_{k}\}_k$. In addition, we have $w_0 = x_0 = y_0$ and $w_2 = y_0 - \eta_{1}(v_1+e_1)$ similar to $x_1 = x_0 -\eta_1(g_1+r_1)$.
    Requiring $\PDg_{\eta_{k+1} h}(w_{2(k+1)},v_{2k+1};\,w_{2k+2}+\eta_{k+1}(v_{2k+1}+e_{2k+1})) \leq \tfrac{\sigma^2}{2}\normsq{w_{2(k+1)}-w_{2k}}$ allows to identify the primal-dual pair $(y_{k+1},g_{k+1})$ with $(w_{2(k+1)},v_{2k+1})$.
    
    Finally, we set $\text{EQ}_{2k+2} = \PDg_{\eta_{k+1} h}(w_{2k+2},v_{2k+1};\,w_{2k+2}+\eta_{k+1}(v_{2k+1}+e_{2k+1})) - \tfrac{\sigma^2}{2}\normsq{w_{2(k+1)}-w_{2k}}$ which is Gram-representable (similar to \emph{hybrid approximate extragradient algorithm}).
\end{itemize}

\section{Interpolation with {$\mathbf{\varepsilon}$}-subdifferentials}\label{app:epssubdiff}
In this section, we provide the necessary interpolation result for working with $\varepsilon$-subdifferentials inside performance estimation problems.
\begin{theorem} Let $I$ be a finite set of indices and $S=\{(w_i,v_i,h_i,\varepsilon_i)\}_{i\in I}$  with $w_i,v_i\in\Rd$, $h_i,\varepsilon_i\in\mathbb{R}$ for all $i\in I$. There exists $h\in\Fccp(\Rd)$ satisfying 
\begin{equation}\label{eq:interp_setting}
h_i = h(w_i),\text{ and } v_i\in\partial_{\varepsilon_i}h(w_i) \text{ for all } i\in I
\end{equation}
if and only if
\begin{equation}\label{eq:interp_eps}
\begin{aligned}
h_i\geq h_j +\inner{v_j}{w_i-w_j}-\varepsilon_j
\end{aligned}
\end{equation}
holds for all $i,j\in I$.
\end{theorem}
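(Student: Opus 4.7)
\medskip

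\noindent\textbf{Proof plan.} The theorem has the classical shape of a convex interpolation result (see e.g.~\cite[Theorem 1]{taylor2017smooth}), and I expect the proof to follow the same two-step pattern, with the $\varepsilon$-subdifferential entering only through an additive slack in the defining affine inequalities. Neither direction should present a real obstacle; the only mildly delicate point is to check that the candidate function realises the prescribed function values exactly, rather than merely lower-bounding them.

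\medskip

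\noindent\emph{Necessity.} First I would handle the easy direction. If $h \in \Fccp(\Rd)$ satisfies \eqref{eq:interp_setting}, then for every $j \in I$, the condition $v_j \in \partial_{\varepsilon_j} h(w_j)$ unwinds, by Definition~\ref{def:epssubdif}, to
\[
 h(z) \geq h(w_j) + \inner{v_j}{z - w_j} - \varepsilon_j \quad \forall z \in \Rd.
\]
Evaluating at $z = w_i$ and using $h(w_i) = h_i$, $h(w_j) = h_j$ yields~\eqref{eq:interp_eps}.

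\medskip

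\noindent\emph{Sufficiency.} The natural candidate, motivated by the fact that each inequality in \eqref{eq:interp_eps} is affine in the ``test point'' $w_i$, is the pointwise maximum of the $|I|$ affine functions attached to $S$:
\[
 h(z) := \max_{j \in I} \Big\{ h_j + \inner{v_j}{z - w_j} - \varepsilon_j \Big\}.
\]
Since $I$ is finite, $h$ is the maximum of finitely many affine functions, hence closed, proper, and convex, so $h \in \Fccp(\Rd)$. I would then check the two required properties in turn. For $h(w_i) = h_i$: plugging in $z = w_i$, the term $j = i$ attains the value $h_i$, while for every other $j \in I$ the hypothesis~\eqref{eq:interp_eps} gives $h_j + \inner{v_j}{w_i - w_j} - \varepsilon_j \leq h_i$; hence the maximum equals $h_i$. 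For $v_i \in \partial_{\varepsilon_i} h(w_i)$: by construction of $h$ as a maximum, for every $z \in \Rd$,
\[
 h(z) \geq h_i + \inner{v_i}{z - w_i} - \varepsilon_i = h(w_i) + \inner{v_i}{z - w_i} - \varepsilon_i,
\]
which is exactly the defining inequality of an $\varepsilon_i$-subgradient at $w_i$.

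\medskip

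\noindent\emph{Where the (minor) difficulty lies.} The only step that is not purely formal is verifying that the maximum in the definition of $h$ at the point $w_i$ is realised by the index $j=i$ and not strictly larger at some other index; this is precisely what the hypothesis~\eqref{eq:interp_eps} guarantees, and it is the step that makes the interpolation inequalities both necessary and sufficient. No further machinery (e.g.\ Fenchel duality) is required, in contrast with smoothness-aware interpolation theorems.
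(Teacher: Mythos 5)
Your proof is correct and follows essentially the same route as the paper: necessity by instantiating the $\varepsilon$-subgradient inequality at the sample points, and sufficiency via the pointwise maximum of the affine minorants $h_j + \inner{v_j}{\cdot - w_j} - \varepsilon_j$. You simply spell out the verification steps that the paper leaves as "one can easily check."
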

\begin{proof} $(\Rightarrow)$ Assuming $h\in\Fccp$ and~\eqref{eq:interp_setting}, the inequalities~\eqref{eq:interp_eps} hold by definition.

$(\Leftarrow)$ Assuming~\eqref{eq:interp_eps} hold, one can perform the following construction:
\begin{equation*}
\begin{aligned}
\tilde{h}(x)=\max_i\{h_i+\inner{v_i}{x-w_i}-\varepsilon_i\},
\end{aligned}
\end{equation*}
and one can easily check that $h=\tilde{h}\in\Fccp$ satisfies~\eqref{eq:interp_setting}.
\end{proof}

\section{Equivalence with Güler's method}\label{app:guler-eq}
In this section, we show that optimized algorithm \eqref{eq:orip} and Güler's second method \cite[Section 6]{guler1992new} are equivalent (i.e., produce the same iterates), in the case of exact proximal computations (i.e., $\sigma=0$).

We consider a constant sequence of step sizes $\{\lambda_k\}_k$ with $\lambda_k = \lambda >0$.
In Güler's second method, the sequence $\{\beta_k\}_k$ is defined as 
$\beta_1 = 1$ and \[ \beta_{k+1} = \tfrac{1+\sqrt{4\beta_k^2+1}}{2}.\]
The sequence $\{A_k\}_k$ generated by \eqref{eq:orip} satisfies $A_0=0$ and
\[A_{k+1} = A_k+\tfrac{\lambda+\sqrt{4\lambda A_k + \lambda^2}}{2},\quad k\geq 0. \]
We can link together these two sequences through the following equality
\begin{equation}\label{eq:equi-beta-A}
    \beta_{k} = \tfrac{A_{k}-A_{k-1}}{\lambda}, \quad k\geq 1.
\end{equation}
Let us prove it recursively. First, observe that $\beta_1 = 1$ and $\tfrac{A_1-A_0}{\lambda} = 1$. Then assuming that the property is true for some $k \geq 1$,
we have 
\begin{align*}
    \beta_{k+1} &= \tfrac{1+\sqrt{4\beta_k^2 +1}}{2}\\
    &= \tfrac{1+\sqrt{4\tfrac{(A_{k+1}-A_k)^2}{\lambda^2} +1}}{2}.
\end{align*}
One might notice that \[(A_{k+1}-A_k)^2 = \tfrac{2\lambda^2+4\lambda A_k+2\lambda\sqrt{4\lambda A_k + \lambda^2}}{4}= \lambda A_{k+1},\]
which gives 
\begin{align*}
    \beta_{k+1} &= \tfrac{1+\sqrt{4\tfrac{A_{k+1}}{\lambda} +1}}{2}\\
    &= \tfrac{\lambda + \sqrt{4\lambda A_{k+1} + \lambda^2}}{2\lambda}\\
    &= \tfrac{A_{k+2}-A_{k+1}}{\lambda},
\end{align*}
and we finally arrive to~\eqref{eq:equi-beta-A}.

In the exact case ($\sigma =0$) iterations of \eqref{eq:orip} can be written as
\[\left\{\begin{array}{ccl}
    y_k &=& x_k + \tfrac{\lambda}{A_{k+1}-A_k}(z_k-x_k)  \\
    x_{k+1} &=& \prox_{\lambda h}(y_k)\\
    z_{k+1} &=& z_k +\tfrac{2(A_{k+1}-A_k)}{\lambda}(x_{k+1}-y_{k}).
\end{array}\right.\]

Therefore, we can express
\begin{align*}
    y_{k+1} & = x_{k+1} + \tfrac{\lambda}{A_{k+2}-A_{k+1}}(z_k +\tfrac{2(A_{k+1}-A_k)}{\lambda}(x_{k+1}-y_{k})-x_{k+1})\\
    &=x_{k+1} + \tfrac{\lambda}{A_{k+2}-A_{k+1}}(x_k - \tfrac{A_{k+1}-A_k}{\lambda}(x_k-y_k) +\tfrac{2(A_{k+1}-A_k)}{\lambda}(x_{k+1}-y_{k})-x_{k+1})\\
    &=x_{k+1} + \tfrac{\lambda}{A_{k+2}-A_{k+1}}\left(\left(\tfrac{A_{k+1}-A_k}{\lambda}-1\right)(x_{k+1}-x_k) +\tfrac{A_{k+1}-A_k}{\lambda}(x_{k+1}-y_{k})\right),
\end{align*}
and combining the last equality with \eqref{eq:equi-beta-A} leads to 
\[y_{k+1} = x_{k+1} + \tfrac{\beta_{k+1}-1}{\beta_{k+2}}(x_{k+1}-x_k) + \tfrac{\beta_{k+1}}{\beta_{k+2}}(x_{k+1}-y_k) \]
which is exactly the update in Güler's second method \cite[Section 6]{guler1992new} modulo a translation in the indices of the $\{y_k\}_k$ sequence (indeed in Güler's method $y_1=x_0$ whereas in \eqref{eq:orip} $y_0=x_0$).

\section{Missing details in Theorem~\ref{thm:orip2}}\label{sec:missingparts_orip}
The missing elements in the proof of Theorem~\ref{thm:orip2} are presented bellow.
\begin{proof}
Let us rewrite the method in terms of a single sequence, by substitution of $y_k$ and $z_k$:
\begin{equation}\label{eq:eq_orip}
\begin{aligned}
e_k &\coloneqq \tfrac{1}{\lambda_k}\left(y_{k-1} -\lambda_kg_k - x_k\right)\\
x_{k}&=\tfrac{\lambda_k}{A_{k}-A_{k-1}}\left(x_0-\tfrac{2}{1+\sigma}\sum_{i=1}^{k-1}(A_{i}-A_{i-1}) g_i\right)+\left(1-\tfrac{\lambda_k}{A_{k}-A_{k-1}}\right)x_{k-1} -\lambda_{k} (g_{k}+e_{k}),\\
\end{aligned}
\end{equation}
and let us state the following identity on the $A_k$ coefficients
\begin{equation}\label{eq:Ak}
    \lambda_{k+1}A_{k+1}=(A_{k+1}-A_k)^2 \text{ (for $k\geq 0$)}.
\end{equation}

We prove the desired convergence result by induction. First, for $N=1$
\begin{equation*}
    \begin{aligned}
    0 \geq& \nu_{\star,1}[h(u_1)-h(x_\star) + \inner{g_1}{x_\star-u_1}] + \nu_{1,1}[h(u_1)-h(x_1)+\inner{g_1}{x_1-u_1}] \\
    & + \nu_1[\tfrac{\lambda_1}{2}\normsq{e_1}-\tfrac{\lambda_1\sigma^2}{2}\normsq{e_1+g_1} + h(x_1) - h(u_1) -\inner{g_1}{x_1-u_1}]
    \end{aligned}
\end{equation*}
with $\nu_{\star,1}= \tfrac{A_1-A_0}{1+\sigma}=\tfrac{A_1}{1+\sigma}$ as $A_0=0$, $\nu_{1,1} = \tfrac{(1-\sigma)A_1}{\sigma(1+\sigma)}$ and $\nu_1 = \tfrac{A_1}{\sigma(1+\sigma)}$. This gives 
\begin{align*}
0 \geq& \tfrac{A_1}{1+\sigma}(h(x_1)-h_\star) + \tfrac{A_1}{1+\sigma}\inner{g_1}{x_\star-x_1} + \tfrac{A_1}{\sigma(1+\sigma)}[\tfrac12\normsq{e_1}-\tfrac{\sigma^2}{2}\normsq{e_1+g_1}]\\
=& \tfrac{A_1}{1+\sigma}(h(x_1)-h_\star) + \tfrac{A_1}{1+\sigma}\inner{g_1}{x_\star-x_0 + \lambda_1(g_1+e_1)} + \tfrac{A_1}{\sigma(1+\sigma)}[\tfrac{\lambda_1}{2}\normsq{e_1}-\tfrac{\lambda_1\sigma^2}{2}\normsq{e_1+g_1}]\\
=& \tfrac{A_1}{1+\sigma}(h(x_1)-h_\star) + \tfrac{1}{2}\inner{2\tfrac{A_1}{1+\sigma}g_1}{x_\star-x_0}  + \inner{\tfrac{A_1}{1+\sigma}g_1}{\lambda_1(g_1+e_1)} + \tfrac{A_1}{\sigma(1+\sigma)}[\tfrac{\lambda_1}{2}\normsq{e_1}-\tfrac{\lambda_1\sigma^2}{2}\normsq{e_1+g_1}]\\
=& \tfrac{A_1}{1+\sigma}(h(x_1)-h_\star) + \tfrac14\normsq{x_\star-x_0+2\tfrac{A_1}{1+\sigma}g_1} -\tfrac14\normsq{x_\star-x_0} -\normsq{\tfrac{A_1}{1+\sigma}g_1}  \\ &+\inner{\tfrac{A_1}{1+\sigma}g_1}{\lambda_1(g_1+e_1)} + \tfrac{A_1}{\sigma(1+\sigma)}[\tfrac{\lambda_1}{2}\normsq{e_1}-\tfrac{\lambda_1\sigma^2}{2}\normsq{e_1+g_1}]\\
=& \tfrac{A_1}{1+\sigma}(h(x_1)-h_\star) + \tfrac14\normsq{x_\star-x_0+2\tfrac{A_1}{1+\sigma}g_1} -\tfrac14\normsq{x_\star-x_0} + \tfrac{A_1\lambda_1(1-\sigma)}{2\sigma}\normsq{e_1} \\
&+ \tfrac{A_1\lambda_1(1-\sigma)}{1+\sigma}\inner{g_1}{e_1} + \tfrac{A_1}{1+\sigma}\left(-\tfrac{A_1}{1+\sigma} + \lambda_1 - \tfrac{\lambda_1\sigma}{2} \right)\normsq{g_1} \\
 =& \tfrac{A_1}{1+\sigma}(h(x_1)-h_\star) + \tfrac14\normsq{x_\star-x_0+2\tfrac{A_1}{1+\sigma}g_1} -\tfrac14\normsq{x_\star-x_0} \\
 &+ \tfrac{A_1\lambda_1(1-\sigma)}{2\sigma}\normsq{e_1+\tfrac{\sigma}{1+\sigma}g_1} 
 + \tfrac{A_1}{1+\sigma}\left(-\tfrac{A_1}{1+\sigma} + \lambda_1 - \tfrac{\lambda_1\sigma}{2} -\tfrac{\lambda_1(1-\sigma)\sigma}{2(1+\sigma)}\right)\normsq{g_1} \\
 =& \tfrac{A_1}{1+\sigma}(h(x_1)-h_\star) + \tfrac14\normsq{x_\star-x_0+2\tfrac{A_1}{1+\sigma}g_1} -\tfrac14\normsq{x_\star-x_0} \\
 &+ \tfrac{A_1\lambda_1(1-\sigma)}{2\sigma}\normsq{e_1+\tfrac{\sigma}{1+\sigma}g_1} 
 + \tfrac{A_1}{1+\sigma}\left(\tfrac{\lambda_1-A_1}{1+\sigma}\right)\normsq{g_1} \\
 =& \tfrac{A_1}{1+\sigma}(h(x_1)-h_\star) + \tfrac14\normsq{x_\star-x_0+2\tfrac{A_1}{1+\sigma}g_1} -\tfrac14\normsq{x_\star-x_0} + \tfrac{A_1\lambda_1(1-\sigma)}{2\sigma}\normsq{e_1+\tfrac{\sigma}{1+\sigma}g_1}, 
\end{align*}
where we used in the last line that $A_1 = \lambda_1$.

Now, assuming the weighted sum can be reformulated as the desired inequality for $N=k$, that is:
\begin{align*}
0\geq& \tfrac{A_k}{1+\sigma}(h(x_k)-h_\star)  -\tfrac14\normsq{x_\star-x_0} + \tfrac14\normsq{x_\star-x_0+\tfrac{2}{1+\sigma}\sum_{i=1}^{k}(A_{i}-A_{i-1})g_i} \\
 &+ \tfrac{(1-\sigma)}{2\sigma}\sum_{i=1}^k A_i\lambda_i\normsq{e_i+\tfrac{\sigma}{1+\sigma}g_i}, 
\end{align*}
let us prove it also holds true for $N=k+1$. Noticing that the weighted sum for $k+1$ is exactly the weighted sum for $k$ (which can be reformulated as desired, through our induction hypothesis) with $4$ additional inequalities, we get the following valid inequality
\begin{align*}
0\geq& \tfrac{A_k}{1+\sigma}(h(x_k)-h_\star)  -\tfrac14\normsq{x_\star-x_0} + \tfrac14\normsq{x_\star-x_0+\tfrac{2}{1+\sigma}\sum_{i=1}^{k}(A_{i}-A_{i-1})g_i} \\
 &+ \tfrac{(1-\sigma)}{2\sigma}\sum_{i=1}^k A_i\lambda_i\normsq{e_i+\tfrac{\sigma}{1+\sigma}g_i} \\
 & + \tfrac{A_{k+1}-A_k}{1+\sigma}[h(u_{k+1})-h_\star + \inner{g_{k+1}}{x_\star-u_{k+1}}]\\
 & + \tfrac{(1-\sigma)A_{k+1}}{(1+\sigma)\sigma}[h(u_{k+1})-h(x_{k+1}) + \inner{g_{k+1}}{x_{k+1}-u_{k+1}}]\\
 & + \tfrac{A_k}{1+\sigma}[h(u_{k+1})-h(x_{k}) + \inner{g_{k+1}}{x_k-u_{k+1}}]\\
 & + \tfrac{A_{k+1}}{(1+\sigma)\sigma}\left[\tfrac{\lambda_{k+1}}{2}\normsq{e_{k+1}} - \tfrac{\lambda_{k+1}\sigma^2}{2}\normsq{e_{k+1}+g_{k+1}} + h(x_{k+1}) - h(u_{k+1}) -\inner{g_{k+1}}{x_{k+1}-u_{k+1}}\right].
\end{align*}
By regrouping all function values we get the following simplification:
\begin{align*}
&[\tfrac{A_k}{1+\sigma}-\tfrac{A_k}{1+\sigma}]h(x_k)+\tfrac{A_{k+1}}{1+\sigma}[\tfrac{1}{\sigma}-\tfrac{1-\sigma}{\sigma}](h(x_{k+1})-h_\star)
+\tfrac{1}{1+\sigma}[A_{k+1}-A_k + \tfrac{1-\sigma}{\sigma}A_{k+1} + A_k - \tfrac{1}{\sigma}A_{k+1}]h(u_{k+1})\\
&=\tfrac{A_{k+1}}{1+\sigma}(h(x_{k+1})-h_\star),   
\end{align*}
where $h(x_k)$ and $h(u_{k+1})$ disappear. The remaining inequality is therefore
\begin{equation}\label{eq:intermed}
\begin{aligned}
0\geq& \tfrac{A_{k+1}}{1+\sigma}(h(x_{k+1})-h_\star) -\tfrac14\normsq{x_0-x_\star} + \tfrac14\normsq{x_\star-x_0+\tfrac{2}{1+\sigma}\sum_{i=1}^k (A_i-A_{i-1}) g_i}\\
&+ \tfrac{(1-\sigma)}{2\sigma}\sum_{i=1}^k A_i\lambda_i\normsq{e_i+\tfrac{\sigma}{1+\sigma}g_i}+
\tfrac{A_{k+1}\lambda_{k+1}}{2(1+\sigma)\sigma}[\normsq{e_{k+1}} - \sigma^2\normsq{e_{k+1}+g_{k+1}}]
\\&+ \tfrac{1}{1+\sigma}\inner{g_{k+1}}{(A_{k+1}-A_k)(x_\star-u_{k+1}) - A_{k+1}(x_{k+1}-u_{k+1})}+A_k(x_k-u_{k+1})\\
= & \tfrac{A_{k+1}}{1+\sigma}(h(x_{k+1})-h_\star) -\tfrac14\normsq{x_0-x_\star} + \tfrac14\normsq{x_\star-x_0+\tfrac{2}{1+\sigma}\sum_{i=1}^k (A_i-A_{i-1}) g_i}\\
&+ \tfrac{(1-\sigma)}{2\sigma}\sum_{i=1}^k A_i\lambda_i\normsq{e_i+\tfrac{\sigma}{1+\sigma}g_i}+
\tfrac{A_{k+1}\lambda_{k+1}}{2(1+\sigma)\sigma}[\normsq{e_{k+1}} - \sigma^2\normsq{e_{k+1}+g_{k+1}}]
\\&+ \tfrac{1}{1+\sigma}\inner{g_{k+1}}{(A_{k+1}-A_k)x_\star- A_{k+1}x_{k+1} + A_kx_k}.
\end{aligned}
\end{equation}
Then, by using \eqref{eq:Ak}, one can observe that 
\begin{align*}
A_{k+1}x_{k+1} =& \tfrac{A_{k+1}\lambda_{k+1}}{A_{k+1}-A_{k}}\left(x_0-\tfrac{2}{1+\sigma}\sum_{i=1}^{k}(A_{i}-A_{i-1}) g_i\right)+\left(A_{k+1}-\tfrac{A_{k+1}\lambda_{k+1}}{A_{k+1}-A_{k}}\right)x_{k} \\
&-A_{k+1}\lambda_{k+1} (g_{k+1}+e_{k+1})\\
=& (A_{k+1}-A_k)\left(x_0-\tfrac{2}{1+\sigma}\sum_{i=1}^{k}(A_{i}-A_{i-1}) g_i\right) + A_kx_k-A_{k+1}\lambda_{k+1} (g_{k+1}+e_{k+1}),
\end{align*}
and by re-injecting this inside the last line of \eqref{eq:intermed}, we get
\begin{align*}
0\geq & \tfrac{A_{k+1}}{1+\sigma}(h(x_{k+1})-h_\star) -\tfrac14\normsq{x_0-x_\star} + \tfrac14\normsq{x_\star-x_0+\tfrac{2}{1+\sigma}\sum_{i=1}^k (A_i-A_{i-1}) g_i}\\
&+ \tfrac{(1-\sigma)}{2\sigma}\sum_{i=1}^k A_i\lambda_i\normsq{e_i+\tfrac{\sigma}{1+\sigma}g_i}+
\tfrac{A_{k+1}\lambda_{k+1}}{2(1+\sigma)\sigma}[\normsq{e_{k+1}} - \sigma^2\normsq{e_{k+1}+g_{k+1}}]
\\&+ \tfrac{1}{1+\sigma}\inner{(A_{k+1}-A_k)g_{k+1}}{x_\star- x_0 +\tfrac{2}{1+\sigma}\sum_{i=1}^{k}(A_{i}-A_{i-1}) g_i }\\
&+ \tfrac{A_{k+1}\lambda_{k+1}}{1+\sigma}\inner{g_{k+1}}{(g_{k+1}+e_{k+1})}.
\end{align*}
We can then proceed in a similar manner as in the case $k=1$ for factorizing the quadratic terms,
\begin{align*}
0\geq & \tfrac{A_{k+1}}{1+\sigma}(h(x_{k+1})-h_\star) -\tfrac14\normsq{x_0-x_\star} + \tfrac14\normsq{x_\star-x_0+\tfrac{2}{1+\sigma}\sum_{i=1}^{k+1} (A_i-A_{i-1}) g_i}\\
&+ \tfrac{(1-\sigma)}{2\sigma}\sum_{i=1}^k A_i\lambda_i\normsq{e_i+\tfrac{\sigma}{1+\sigma}g_i}+
\tfrac{A_{k+1}\lambda_{k+1}}{2(1+\sigma)\sigma}[\normsq{e_{k+1}} - \sigma^2\normsq{e_{k+1}+g_{k+1}}]
\\&-\tfrac{(A_{k+1}-A_k)^2}{(1+\sigma)^2}\normsq{g_{k+1}}
+ \tfrac{A_{k+1}\lambda_{k+1}}{1+\sigma}\inner{g_{k+1}}{(g_{k+1}+e_{k+1})}\\
= & \tfrac{A_{k+1}}{1+\sigma}(h(x_{k+1})-h_\star) -\tfrac14\normsq{x_0-x_\star} + \tfrac14\normsq{x_\star-x_0+\tfrac{2}{1+\sigma}\sum_{i=1}^{k+1} (A_i-A_{i-1}) g_i}\\
&+ \tfrac{(1-\sigma)}{2\sigma}\sum_{i=1}^k A_i\lambda_i\normsq{e_i+\tfrac{\sigma}{1+\sigma}g_i}+
\tfrac{A_{k+1}\lambda_{k+1}(1-\sigma)}{2\sigma}\normsq{e_{k+1}+\tfrac{\sigma}{1+\sigma}g_{k+1}} 
\\&+[\tfrac{A_{k+1}\lambda_{k+1}}{(1+\sigma)}-\tfrac{A_{k+1}\lambda_{k+1}\sigma}{2(1+\sigma)}-\tfrac{(A_{k+1}-A_k)^2}{(1+\sigma)^2} - \tfrac{A_{k+1}\lambda_{k+1}\sigma(1-\sigma)}{2(1+\sigma)^2}]\normsq{g_{k+1}}
\\
=& \tfrac{A_{k+1}}{1+\sigma}(h(x_{k+1})-h_\star) -\tfrac14\normsq{x_0-x_\star} + \tfrac14\normsq{x_\star-x_0+\tfrac{2}{1+\sigma}\sum_{i=1}^{k+1} (A_i-A_{i-1}) g_i}\\
&+ \tfrac{(1-\sigma)}{2\sigma}\sum_{i=1}^{k+1} A_i\lambda_i\normsq{e_i+\tfrac{\sigma}{1+\sigma}g_i}+\tfrac{A_{k+1}\lambda_{k+1}}{(1+\sigma)}[1-\tfrac{\sigma}{2}-\tfrac{1}{(1+\sigma)} - \tfrac{\sigma(1-\sigma)}{2(1+\sigma)}]\normsq{g_{k+1}}\\
= & \tfrac{A_{k+1}}{1+\sigma}(h(x_{k+1})-h_\star) -\tfrac14\normsq{x_0-x_\star} + \tfrac14\normsq{x_\star-x_0+\tfrac{2}{1+\sigma}\sum_{i=1}^{k+1} (A_i-A_{i-1}) g_i}\\
&+ \tfrac{(1-\sigma)}{2\sigma}\sum_{i=1}^{k+1} A_i\lambda_i\normsq{e_i+\tfrac{\sigma}{1+\sigma}g_i},
\end{align*}since $1-\tfrac{\sigma}{2}-\tfrac{1}{1+\sigma} - \tfrac{\sigma(1-\sigma)}{2(1+\sigma)} = 0$ and this concludes the proof.
\end{proof}

\section{Tightness of Theorem~\ref{thm:orip2}}\label{sec:tight_orip}
\begin{proof}One can verify that the guarantee for \eqref{eq:orip} provided by Theorem~\ref{thm:orip2} is actually non-improvable. That is, for all $\{\lambda_k\}_k$ with $\lambda_k>0$, $\sigma\in[0,1]$, $d\in\mathbb{N}$, $x_0\in\mathbb{R}^d$, and $N\in\mathbb{N}$, there exists $f\in\Fccp(\Rd)$ such that this bound is achieved with equality. For proving this statement, it is sufficient to exhibit a one-dimensional function for which the bound is attained, which is what we do below. The bound is attained on the one-dimensional linear minimization problem
\begin{equation}
\min_x\, \{f(x)\equiv c\, x + i_{\R_+}(x)\},\label{eq:linoptim}
\end{equation}
with an appropriate choice of $c> 0$, where $i_{\R_+}$ denotes the convex indicator function of $\R_+$. Indeed, one can check that the relative error criterion
\[\exists u_{k}\in\R_+,\;\tfrac{\lambda_k}{2}\normsq{e_k} + f(x_k)-f(u_k) -\inner{g_k}{x_k-u_k}\leq \tfrac{\lambda_k\sigma^2}{2}\normsq{e_k+g_{k}} \]
is satisfied with equality when picking $g_{k}=c$ ($g_k$ is thus a subgradient at $x_k$), $u_k=x_k$, and $e_{k}=-\tfrac{c\sigma}{1+\sigma}$; and hence $x_{k}=y_{k-1}-\tfrac{c\lambda_k}{1+\sigma}$. The argument is then as follows: if for some $x_0>0$ and $0\leq h\leq x_0/c$ we manage to show that $x_N=x_0-c h$, then $f(x_N)-f(x_\star)=c(x_0-c h)$ and hence the value of $c$ producing the worst possible (maximal) value of $f(x_N)$ is $c=\tfrac{x_0}{2h}$. In that case, the resulting value is $f(x_N)-f(x_\star)=\tfrac{x_0^2}{4h}$. Therefore, in order to prove that the guarantee from Theorem~\ref{thm:orip2} cannot be improved, we show that $x_N=x_0-\tfrac{A_N}{1+\sigma}c$ on the linear problem~\eqref{eq:linoptim}. It is easy to show that $x_1=x_0-\tfrac{A_1}{1+\sigma}c$ using $A_1=\lambda_1$. The argument follows by induction: assuming $x_{k}=x_0-\tfrac{A_k}{1+\sigma}c$, one can compute
\begin{equation*}
\begin{aligned}
x_{k+1} =& \tfrac{\lambda_{k+1}}{A_{k+1}-A_{k}}\left(x_0-\tfrac{2}{1+\sigma}\sum_{i=1}^{k}(A_{i}-A_{i-1}) g_i\right)+\left(1-\tfrac{\lambda_{k+1}}{A_{k+1}-A_{k}}\right)x_{k} \\&-\lambda_{k+1} (g_{k+1}+e_{k+1})\\
=& \tfrac{\lambda_{k+1}}{A_{k+1}-A_{k}}\left(x_0-\tfrac{2c}{1+\sigma}A_{k}\right)+\left(1-\tfrac{\lambda_{k+1}}{A_{k+1}-A_{k}}\right)\left( x_0-\tfrac{A_k}{1+\sigma}c\right) -\lambda_{k+1} \tfrac{c}{1+\sigma}\\
=&x_0-\tfrac{c}{1+\sigma}\tfrac{2\lambda_{k+1}A_k + (A_{k+1}-A_k)A_k - \lambda_{k+1}A_k + \lambda_{k+1}(A_{k+1}-A_k)}{A_{k+1}-A_k}\\
=&x_0-\tfrac{c}{1+\sigma}\tfrac{ (A_{k+1}-A_k)A_k  + \lambda_{k+1}A_{k+1}}{A_{k+1}-A_k}\\
=&x_0-\tfrac{c}{1+\sigma}A_{k+1},
\end{aligned}
\end{equation*}
where the second equality follows from simple substitutions, and the last equalities follow from basic algebra and $\lambda_{k+1}A_{k+1} = (A_{k+1}-A_k)^2$. The desired statement is proved by picking $c=\tfrac{(1+\sigma)x_0}{2A_N}$, reaching $f(x_N)-f(x_\star) = \tfrac{(1+\sigma)x_0^2}{A_N}$.
\end{proof}
\section{Tightness of Theorem~\ref{thm:ppa-str}}\label{app:tight-str}
\begin{proof}
We show that the guarantee provided in Theorem~\ref{thm:ppa-str} is non-improvable. That is, for all $\mu \geq 0$, $\{\lambda_k\}_k$ with $\lambda_k\geq0$, $\sigma\in[0,1]$, $d\in\mathbb{N}$, $w_0\in\mathbb{R}^d$, and $N\in\mathbb{N}^*$, there exists $h\in\Fmu(\Rd)$ such that this bound is achieved with equality. Indeed, the bound is attained on the simple quadratic minimization problem 
\begin{equation}
    \min_x \{h(x) \equiv \tfrac{\mu}{2}\normsq{x}\}.
\end{equation}
We can check that the relative error criterion
\[\tfrac{\lambda_k}{2}\normsq{e_k} \leq \tfrac{\sigma^2\lambda_k}{2}\normsq{e_k+v_k}, \]
is satisfied with equality when picking $v_k = \nabla h(w_{k+1}) = \mu w_{k+1}$ and $e_k = -\tfrac{\sigma}{1+\sigma}v_k$. Under these choices, one can write 
\[w_{k+1} = w_k -\tfrac{\lambda_{k+1}\mu}{1+\sigma}w_{k+1}, \]
which leads to 
\[w_{k+1} = \tfrac{1+\sigma}{1+\sigma + \lambda_{k+1}}w_k. \]
Finally 
\[ w_{N} = \prod_{i=1}^{N}\tfrac{1+\sigma}{1+\sigma+\lambda_{i}\mu}w_0,\]
and the desired results follows.
\end{proof}

\end{document}